\title{Rademacher Series for $\eta$-quotients}
\author{Ethan Sussman}
\date{October 17 2017}
\email{ethanws@stanford.edu}
\address{Physics Department, Stanford University, Stanford, California}
\subjclass[2010]{11F30 (Primary), 05A17, 11P82, 30E20, 41A30  (Secondary)} 
\keywords{Eta-quotient, Eta-product, Eta-function, Rademacher Series}
\theoremstyle{definition}
\newtheorem*{definition*}{Definition}
\newtheorem*{problem*}{Problem}
\newtheorem{theorem}{Theorem}[section]
\newtheorem{lemma}{Lemma}[section]
\newtheorem*{lemma*}{Lemma}
\newtheorem*{corollary*}{Corollary}
\newtheorem{proposition}{Proposition}[section]
\newtheorem*{proposition*}{Proposition}
\newtheorem*{solution*}{Solution}
\newcommand{\CC}{\mathbb{C}}
\newcommand{\DD}{\mathbb{D}}
\newcommand{\HH}{\mathbb{H}}
\newcommand{\NN}{\mathbb{N}}
\newcommand{\QQ}{\mathbb{Q}}
\newcommand{\RR}{\mathbb{R}}
\newcommand{\ZZ}{\mathbb{Z}}
\newcommand{\calF}{\mathcal{F}}
\newcommand{\calK}{\mathcal{K}}
\newcommand{\calM}{\mathcal{M}}
\newcommand{\dd}[1]{\,\mathrm{d}#1} 
\newcommand{\SL}{\textrm{SL}}
\newcommand{\lcm}{\operatorname{lcm}} 
\renewcommand{\gcd}{\operatorname{gcd}} 
\newcommand{\mat}[4]{\begin{pmatrix} #1&#2\\#3&#4 \end{pmatrix}}
\renewcommand{\Re}{\operatorname{Re}} 
\begin{document}
\maketitle

\begin{abstract}
	We apply Rademacher's method in order to compute the 
	Fourier coefficients of a large class of $\eta$-quotients. 
\end{abstract}


\section{Background}

A partition of an integer $n$ is a multiset of positive integers whose sum 
is $n$. Let $p(n)$ denote the number of partitions of $n$. The value of $p(n)$ 
may be computed by brute force for sufficiently small $n$ by simply enumerating 
all possible partitions and then counting. However, $p(n)$ grows rapidly and 
brute force computation rapidly becomes intractible. Another technique, 
pioneered by Euler \cite{euler1748introductio}, is to study the properties of 
the generating function
\begin{equation}
Z(q) = \sum_{n=0}^\infty p(n) q^n = \frac{1}{\prod_{n=1}^\infty (1-q^n)}.
\label{eq:first_eq}
\end{equation}
He showed that
\begin{equation}
\frac{1}{Z(q)} = \prod_{n=1}^\infty (1-q^n) =\sum_{k\in \ZZ} (-1)^k
q^{k(3k-1)/2}.
\end{equation}
This is a result regarding formal series. However, we can interpret these 
series as complex valued functions in some appropriate domain. Viewed as 
complex functions, $Z(q)$ and $1/Z(q)$ are both nonvanishing and holomorphic on 
the open unit disk $\DD\subset \CC$ and cannot be analytically 
continued beyond $\DD$. One naturally asks (i) what are the analytic properties 
of these generating functions, and 
(ii) what properties of $p(n)$ may be deduced from these analytic properties.
With regards to (ii), if we know sufficiently many details regarding the 
analytic properties of $Z(q)$, $p(n)$ may simply be extracted by 
performing a Fourier-Laplace transform:
\begin{equation}
p(n) = \frac{1}{2\pi i} \int_\gamma \frac{Z(q)}{q^{n+1}} \dd q 
\label{eq:first_fourier_trans}
\end{equation}
for some suitably chosen contour $\gamma$. The difficulty lies in computing 
this contour integral. 

Before continuing, a word on notation: Given a function $f(q) : \DD \to \CC$, 
we 
may pull back $f(q)$ by the map $q = e^{2\pi i \tau}$ to get a new function 
$f(e^{2\pi i \tau}) : \HH \to \CC$, where $\HH$ is the open upper-half of the 
complex plane. We will often denote $f(e^{2\pi i \tau})$ as simply $f(\tau)$ 
when no confusion should arise. In order to keep track of which variable we are 
working with, we will often refer to two copies of $\CC$ as 
the $q$-plane and the $\tau$-plane. 

The first progress with regards to (i) was due to Dedekind. Dedekind considered 
the eponymous function $\eta:\DD\to \CC$ 
\begin{equation}
\eta(q) = q^{1/24} \prod_{n=1}^\infty (1-q^n)
\label{eq:eta_def}
\end{equation}
where $q=e^{2\pi i \tau}$
\cite{Apostol}. This is 
simply $1/Z(q)$ with a mysterious additional factor of $q^{1/24}$. Dedekind 
showed that $\eta(\tau)$ is a modular 
form of weight $1/2$. That is, for any matrix 
\begin{equation}
M  = \mat{a}{b}{c}{d} \in 
\SL_2(\ZZ),
\end{equation}
where $\SL_2(\ZZ)$ is the group of integral matrices with determinant $+1$,
\begin{equation}
\eta\left( \frac{a\tau+b}{c\tau+d} \right) = \epsilon(a,b,c,d) \sqrt{c\tau+d} 
\cdot
\eta(\tau) \label{eq:eta_transform_law}
\end{equation}
where $\epsilon(a,b,c,d)=\epsilon(M):\SL_2(\ZZ)\to \CC$ is a homomorphism. This 
phase factor is 
called a multiplier system. Dedekind computed $\epsilon(M)$. It is given by 
\begin{equation}
\epsilon(a,b,c,d) = 
\begin{cases}
\exp\left( +\frac{\pi i b}{12} \right) & (c=0,d=1), \\
\exp\left( -\frac{\pi i b}{12} \right) & (c=0,d=-1), \\
\exp\left( \pi i\left[ \frac{a+d}{12c}-s(d,c)-\frac{1}{4} \right]\right) & 
(c>0), \\ 

\exp\left( \pi i\left[ \frac{a+d}{12c}-s(-d,-c)-\frac{1}{4} \right]\right) & 
(c<0). \\
\end{cases}
\label{eq:mult_system}
\end{equation}
Here
\begin{equation}
s(h,k) = \sum_{n=1}^{k-1} \frac{n}{k}\left( \frac{hn}{k} - 
\left\lfloor\frac{hn}{k}\right\rfloor - \frac{1}{2}\right)
\end{equation}
is known as a Dedekind sum. In retrospect, Eq. \ref{eq:eta_transform_law} is 
rather 
remarkable, and allows us to extract asymptotics of $\eta(\tau)$ for $\tau$ 
near a given rational $q\in \QQ\subset \CC$ in terms of the asymptotics of 
$\eta(\tau)$ near $+i\infty$, which are incredibly simple: As $\tau \to 
+ i \infty$, that is as $q\to 0$, $\eta(q)\sim q^{1/24}$. Hardy and Ramanujan 
\cite{hardy244asymptotic} followed by Rademacher 
\cite{rademacher1938partition}\cite{Rademacher43} 
used this in order to carry out 
the Fourier transform in Eq. 
\ref{eq:first_fourier_trans} and therefore compute $p(n)$. This idea is rather 
general, and can be used to compute the Fourier 
coefficients of a wide variety of automorphic forms \cite{RademacherOriginal}. 
Modifications can be used 
in order to compute the Fourier coefficients of modular forms which are modular 
under a congruence subgroup of $\SL_2(\ZZ)$. This idea was pioneered by 
Zuckerman \cite{zuckerman1939coefficients}. In this paper we will use such a 
modification in order to compute the Fourier coefficients of a finite product 
of modular forms precomposed with multiplication by different scalar factors 
$\calM\subset \NN$. 
These forms are modular forms under a congruence subgroup of the modular group, 
specifically 
\begin{equation}
\Gamma_0(\lcm (\calM)) = \left\{ \mat{a}{b}{c}{d} \in \SL_2(\ZZ) : c \equiv 0 
\bmod  \lcm (\calM )\right\}.
\end{equation}
So, our result can be obtained using Zuckerman's method applied to 
$\Gamma_0(\lcm(\calM))$. We instead modify 
Rademacher's original method in a slightly different, but ultimately 
equivalent, way so that the 
calculation may be done for many $\eta$-quotients simultaneously. Our 
result is also a special case of recent work by Bringmann and Ono 
\cite{BringmannOno}, but it is easier to simply derive our expressions from 
scratch.

Specifically, we consider $\eta$-quotients, functions $Z(q)$ of the form
\begin{equation}
Z(\tau) = \prod_{m=1}^\infty \eta(m \tau)^{\delta_m}
\end{equation}
where $\{\delta_m\}_{m=1}^\infty$ is a sequence of integers of which only 
finitely many are nonzero. Those with $\delta_m \geq 0$ for all $m$ are often 
called $\eta$-products. These functions have appeared in several different 
contexts. One context, in the vein of their first, is being related to the 
generating 
functions for some partition-like combinatorial quantity. Rademacher's method 
has been applied successfully for a wide variety of these cases. See the work 
of Grosswald \cite{grosswald1958some}\cite{grosswald1960partitions}, 
Haberzetle \cite{haberzetle1941some}, Hagis 
\cite{hagis1962problem}\cite{hagis1963partitions}\cite{hagis1964partitions}\cite{hagis1966some},
 Hua \cite{hua1942number}, Iseki 
\cite{iseki1960some}\cite{iseki1961partitions}, Livingood 
\cite{livingood1945partition}, Niven \cite{niven1940certain}, as well as more 
recent work by Sills \cite{sills2010rademacher}\cite{sills2010rademacher2} and 
others \cite{kiria2011rademacher}\cite{mc2012hardy} for examples. Some of the 
results in these papers are special cases of the main result in this paper, 
although the proof presented in this paper does not work for many of them. It 
breaks down when the modular form of interest has zero weight. The proof 
presented here does work for some of them, for example the result of Sills in 
\cite{sills2010rademacher}.

A second context is as the partition functions for $1/2$-BPS black holes in 
CHL models of string theory, defined originally in \cite{CHLoriginal}, where 
the frame shape of the $\eta$-quotient corresponds to the frame shape of the 
associated $K3$ symplectic automorphism \cite{He:2013lha}\cite{CHLcomposite}. 
The 
constants $d(n)$ then have physical interpretation as the exponential of the 
entropy of a black hole with a given charge.  See 
\cite{0409}\cite{0507}\cite{0502}\cite{SenNotes} for computations of black hole 
entropy in non-reduced rank models using Rademacher series. See 
\cite{CHLcomposite}\cite{JatkarSen}\cite{Govindarajan:2010fu} for derivations 
of CHL $1/2$-BPS black hole partition functions. A classification of all CHL 
models was recently completed \cite{Persson:2015jka}\cite{Paquette:2017gmb} and 
the latter work includes a list of all CHL frame shapes. For Type IIB string 
theory on $K3\times T^2$ or dual models, the Rademacher series coefficients may 
be computed by a gravity path integral. See 
\cite{deBoer:2006vg}\cite{Manschot:2007ha}\cite{Murthy:2009dq}\cite{Dabholkar:2014ema}\cite{Murthy:2015yfa}\cite{Gomes:2017bpi}\cite{Gomes:2017eac}
for some recent work in this regard. $\eta$-products have also been of interest 
in Matheiu moonshine \cite{He:2013lha}\cite{Paquette:2017gmb}. Just as 
Rademacher series were used to 
compute exact black hole entropy from microscopic partition functions for 
non-CHL models, the original motivation for this work was to compute exact 
black hole entropy from microscopic partition functions for CHL models.

In order to present our main formula, we need a few preliminary 
definitions. Let 
\begin{equation}
n_0 = - \frac{1}{24} \sum_{m=1}^\infty m \cdot \delta_m.
\end{equation}
The function $Z(q)\cdot q^{n_0}$ is holomorphic in the open unit disk $\DD$, 
and so we may write 
\begin{equation}
Z(q) = q^{-n_0}\sum_{n=0}^\infty d(n) q^n.
\end{equation}
for some coefficients $d(n)$. From the product formula for $\eta(q)$ in Eq. 
\ref{eq:eta_def}, each $d(n)$ is 
an integer. The main result of this paper is an explicit formula for $d(n)$ for 
a large class of sequences $\{\delta_m\}_{m=1}^\infty$. Let
\begin{equation}
c_1 = -\frac{1}{2}\sum_{m=1}^\infty \delta_m,\quad c_2(k) = \prod_{m=1}^\infty 
\left[ \frac{\gcd(m,k)}{m} \right]^{\delta_m/2}, \quad c_3(k) = - 
\sum_{m=1}^\infty \delta_m\frac{\gcd(m,k)^2}{m},
\label{eq:constant_defs}
\end{equation}
\begin{equation}
A_k(n) = \sum_{\substack{0\leq h < k \\ \gcd(h,k)=1}} \exp\left[-2\pi 
i\left(\frac{h}{k}\cdot n + \frac{1}{2} \sum_{m=1}^\infty \delta_m\cdot s\left( 
\frac{mh}{\gcd(m,k)},\frac{k}{\gcd(m,k)}\right)\ \right) \right].
\label{eq:klooster_defs}
\end{equation}
Finally let $\calM$ be the set of $m$ 
for which $\delta_m$ is 
nonzero. The quantities $c_1,c_2(k),c_3(k)$ are coefficients which appear in 
our 
calculation and formula. The coefficient $c_1$ is the negative of the 
weight of $Z(\tau)$ as a $\Gamma_0(\lcm(\calM))$-modular form. The sums 
$A_k(n)$ 
closely resemble -- and in some 
cases are -- Kloosterman sums.
We will call them Kloosterman-like sums. It can be shown that $A_k(n)$ is real 
for all $k$ and $n$. With these 
definitions in hand, we may state
\begin{theorem} \label{main}
	If $c_1>0$ and the periodic function $g(k):\NN \to \RR$ given by 
	\begin{equation}
	g(k) = \min_{m\in\calM} \left\{ \frac{\gcd(m,k)^2}{m}\right\} - 
	\frac{c_3(k)}{24} \label{eq:g_const}
	\end{equation}
	is non-negative, then for $n\in \{1,2,\ldots\}$ such that $n> n_0$,
	\begin{equation}
	d(n) = 2\pi \left( \frac{1}{24 (n-n_0)}\right)^{\frac{c_1+1}{2}} 
	\sum_{\substack{k=1 \\ c_3(k)>0}}^\infty c_2(k)c_3(k)^{\frac{c_1+1}{2}} 
	A_k(n) 
	k^{-1}I_{1+c_1}\left[ \frac{\pi}{k} \sqrt{\frac{2}{3} c_3(k) (n-n_0)} 
	\right] \label{eq:1_rademacher_main}
	\end{equation}
	where $I_{1+c_1}$ is the $(1+c_1)$th modified Bessel 
	function of the first kind. \hfill $\blacksquare$
\end{theorem}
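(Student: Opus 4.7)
The plan is to execute the classical Rademacher circle method, adapted so that multiple $\eta$-factors with varying scalar arguments are handled simultaneously. Since $Z(q)\,q^{n_0}$ is holomorphic on $\DD$, Cauchy's integral formula gives
\begin{equation*}
d(n) = \frac{1}{2\pi i}\oint \frac{Z(q)}{q^{n - n_0 + 1}}\,\dd q,
\end{equation*}
and after the substitution $q = e^{2\pi i\tau}$ this becomes an integral along the horizontal segment from $i/N^2$ to $1 + i/N^2$ in $\HH$ for a large integer $N$. I would dissect the segment into the Ford arcs attached to the Farey fractions $h/k$ of order $N$, with $0 \le h < k \le N$ and $\gcd(h,k) = 1$.

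On the arc attached to $h/k$ I apply the $\SL_2(\ZZ)$ transformation that carries the cusp $h/k$ to $+i\infty$ and substitute $\tau = (h + iz)/k$. The main local computation is to track how each factor $\eta(m\tau)^{\delta_m}$ transforms. For each $m \in \calM$, writing $g = \gcd(m,k)$, $m = gm'$, $k = gk'$, one rewrites the argument $mM\tau$ as an $\SL_2(\ZZ)$-image of a suitably rescaled variable, so that Eq. \ref{eq:eta_transform_law} may be applied term-by-term, with each factor contributing its own multiplier system and Jacobian. Summing the phases over $h$ modulo $k$ coprime to $k$ yields precisely $A_k(n)$; the square-root Jacobians combine into $c_2(k)$ together with an overall factor of $z^{-c_1 - 1}$; and the leading $\eta(q) \sim q^{1/24}$ behavior at each transformed cusp produces the exponential $\exp[\pi c_3(k)/(12 k^2 z)]$ that drives the growth of $d(n)$.

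With the principal part extracted, each $z$-integral reduces to a Hankel-type contour integral of the shape
\begin{equation*}
\frac{1}{2\pi i}\int e^{Az + B/z}\,z^{-c_1 - 1}\,\dd z \;\propto\; (B/A)^{(c_1 + 1)/2}\,I_{1 + c_1}(2\sqrt{AB}),
\end{equation*}
which produces the Bessel function in Eq. \ref{eq:1_rademacher_main}. The hypothesis $c_1 > 0$ ensures convergence of the resulting $k$-sum and permits the $z$-contour to be deformed into the full Hankel contour needed to identify $I_{1+c_1}$.

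The main obstacle will be the error analysis. Both the corrections beyond $\eta(q) = q^{1/24}(1 + O(q))$ and the contributions from the non-principal cusps on each Ford arc must be shown to vanish as $N \to \infty$. These estimates are precisely what the hypothesis $g(k) \ge 0$ controls: the quantity $\min_{m \in \calM} \gcd(m,k)^2 / m$ bounds from below the exponent of the first correction term to $\prod_m \eta(m\tau)^{\delta_m}$ after transformation at the cusp $h/k$, while $c_3(k)/24$ sets the strength of the principal exponential growth there; requiring their difference to be non-negative guarantees that the correction terms remain subdominant on the Ford arcs and may be discarded in the limit. Collecting the surviving constants then yields Eq. \ref{eq:1_rademacher_main}.
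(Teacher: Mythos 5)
Your proposal follows essentially the same route as the paper: a Farey/Ford-circle dissection of the Cauchy integral, term-by-term application of the $\eta$ transformation law at each cusp $h/k$ with $\gcd(m,k)$ bookkeeping so that the phases assemble into $A_k(n)$ and the Jacobians into $c_2(k)$, error control coming from $g(k)\ge 0$ (boundedness of the correction to the principal part near each cusp) and from $c_1>0$ (vanishing of the dissection errors and convergence of the $k$-sum), and finally evaluation of the surviving local integral as a modified Bessel function. The only blemishes are bookkeeping slips rather than gaps --- the half-integer Jacobians actually produce a factor $z^{+c_1}$ rather than $z^{-c_1-1}$, and the Schl\"afli-type identity as you display it would yield $I_{c_1}$ rather than $I_{1+c_1}$ --- both of which come out right once the substitution $w\propto 1/z$ is carried through carefully, as in the paper.
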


It is worth spending a moment to comment on the hypotheses of Theorem 
\ref{main}. Rademacher's proof in \cite{RademacherOriginal} works only for 
modular forms of positive dimension, that is negative weight.
Since $c_1$ is the dimension of our modular form under the congruence subgroup 
for 
which it transforms, our constraint $c_1>0$ is analogous to the weight 
constraint of Rademacher. Rademacher and others noted that his formula held for 
other modular forms, including many of weight zero. The various methods of 
proof for these extreme cases seem to be substantially more delicate, relying 
on 
detailed computations involving Kloosterman sums. Rademacher's computation 
of the Fourier coefficients of $J(\tau)$ 
\cite{rademacher1938fourier}\cite{rademacher1939fourier} is a good example.
The story for $\eta$-quotients is analogous. Eq. \ref{eq:1_rademacher_main} 
seems to work for many $\eta$-quotients with $c_1=0$ assuming that the second 
hypothesis regarding $g(k)$ is satisfied. This is not entirely surprising given 
recent work by Duncan and others
\cite{duncan2009rademacher}\cite{cheng2011rademacher}\cite{duncan2015proof}.

Similarly, Rademacher's formula in \cite{RademacherOriginal} includes a sum 
over the polar part of the relevant modular form. For each $k$, each polar term 
gives rise to 
a distinct Bessel function in the Rademacher series. We only get a single 
Bessel function for each $k$, as in Eq. 
\ref{eq:1_rademacher_main}, when the polar part contains one term. 
Analogously, the 
condition that $g(k)\geq 0$ implies that the polar part 
of $Z(\tau)$ at each cusp of $\HH/\Gamma_0(\lcm(\calM))$ contains at most one 
term. If the polar part of $Z(\tau)$ at some cusp of 
$\HH/\Gamma_0(\lcm(\calM))$ contains more than one term, the following 
proof can easily be modified, along with Eq. \ref{eq:1_rademacher_main}, to 
yield the correct expression. It will look like Eq. \ref{eq:1_rademacher_main}, 
but with additional Bessel functions, one for each term in the polar part.

An outline of this short paper is as follows: The proof of Theorem \ref{main} 
is contained in section \ref{sec:proof}. In section \ref{sec:asymp} we 
check that Eq. \ref{eq:1_rademacher_main} has the expected asymptotics. In 
section \ref{sec:numerics} we present some numerics.

\section{Proof of Theorem \ref{main}} \label{sec:proof}

We will use Rademacher's modification of the Hardy-Ramanujan-Littlewood circle 
method to compute $d(n)$. We present some lemmas which are contained in the 
original papers \cite{Rademacher43}\cite{rademacher1938partition} without 
proof. The reader can find proofs of these lemmas in these papers or in many 
expositions, such as \cite{hsu2011partition}, which I personally followed.

As in Rademacher's computation of the Fourier coefficients of $1/\eta(\tau)$, 
we extract 
$d(n)$ by performing a Fourier-Laplace transform:
\begin{equation}
d(n) = \frac{1}{2\pi i} \int_\gamma \frac{Z(q)\cdot q^{n_0}}{q^{n+1}} \dd q 
\label{eq:inv_lap_trans}
\end{equation}
where $\gamma$ is a closed contour winding once around the origin. In essence, 
that is all there is to it. The rest of the proof is simply computing this 
integral.

We will define a sequence of suitable contours $\{\gamma_N\}_{N=1}^\infty$, 
compute the 
integral in Eq. \ref{eq:inv_lap_trans} for $\gamma=\gamma_N$ up to an error 
term, take $N\to \infty$ and show that the error term converges to zero. It is more convenient to define the contours in the $\tau$-plane and then map them into the $q$-plane. We will denote a pullback of some contour $\gamma$ in the $q$-plane to the $\tau$-plane as $\tau(\gamma)$. 

Some preliminary definitions are in other. The $N$th Farey sequence $\calF_N$ 
is the finite sequence containing all irreducible fractions in $[0,1]$ of 
denominator at most $N$ in increasing 
order. The Ford circle $C(h/k)$ associated with the irreducible fraction $h/k$ 
is the circle in the $\tau$-plane with center $h/k+i/2k^2$ and radius 
$1/2k^2$. See Fig. \ref{fig:ford_circles}. We denote by $q(C(h/k))$ the mapping 
of $C(h/k)$ into the $q$-plane. Note that $q(C(0/1)) = q(C(1/1))$. It can be 
shown that the Ford circles corresponding to consecutive 
fractions $h_1/k_2$ and $h_2/k_2$ in some Farey sequence are tangent at the 
point
\begin{equation}
\widetilde{\tau} (h_1/k_1,h_2/k_2) = \frac{h_1k_1+h_2k_2+i}{k_1^2+k_2^2}.
\end{equation}
For irreducible fractions $h_0/h_0<h_1/k_1< h_2/k_2$ with $C(h_0/k_0)$ and 
$C(h_2/k_2)$ tangent to $C(h_1/k_1)$  let 
$\tau(\gamma_{h_0/k_0,h_1/k_1,h_2/k_2})$ be the arc on $C(h_1/k_1)$ from the 
point of tangency with $C(h_0/k_0)$ to the point of tangency with $C(h_2/k_2)$ 
parametrized by arc length. We choose the contour to proceed around the Ford 
circle clockwise so that the arc does not touch the real line. Likewise for 
$h_2/k_2$ such that $C(h_2/k_2)$ is tangent to $C(0/1)$ let 
$\tau(\gamma_{0/1,h_2/k_2})$ be the arc on $C(0/1)$ from the point $+i$ to the 
point of tangency with $C(h_2/k_2)$ parametrized by arc length. Likewise for 
$h_0/k_0$ such that $C(h_0/k_0)$ is tangent to $C(1/1)$ let 
$\tau(\gamma_{h_0/k_0,1/1})$ be the arc on $C(0/1)$ from the point of tangency 
with $C(h_0/k_0)$ to the point $1+i$ parametrized by arc length. If we specify 
an $N\in \{1,2,\ldots\}$ then we may define $\tau(\gamma_{N,h/k})$ for $h/k \in 
\calF_N$ to be 
\begin{equation}
\tau(\gamma_{N,h/k}) = 
\begin{cases}
\tau(\gamma_{0/1,h_2/k_2}) & (h/k=0/1) \\
\tau(\gamma_{h_0/k_0,h/k,h_2/k_2}) & (k\neq 1) \\
\tau(\gamma_{h_0/k_0,1/1}) & (h/k = 1/1)
\end{cases}
\end{equation}
where $h_0/k_0$ is the element in $\calF_N$ immediately before $h/k$ if such an 
element exists and $h_2/k_2$ is the element in $\calF_N$ immediately after 
$h/k$ if such an element exists.

\begin{figure}[t!]
	\centering
	\includegraphics[width = .9\textwidth]{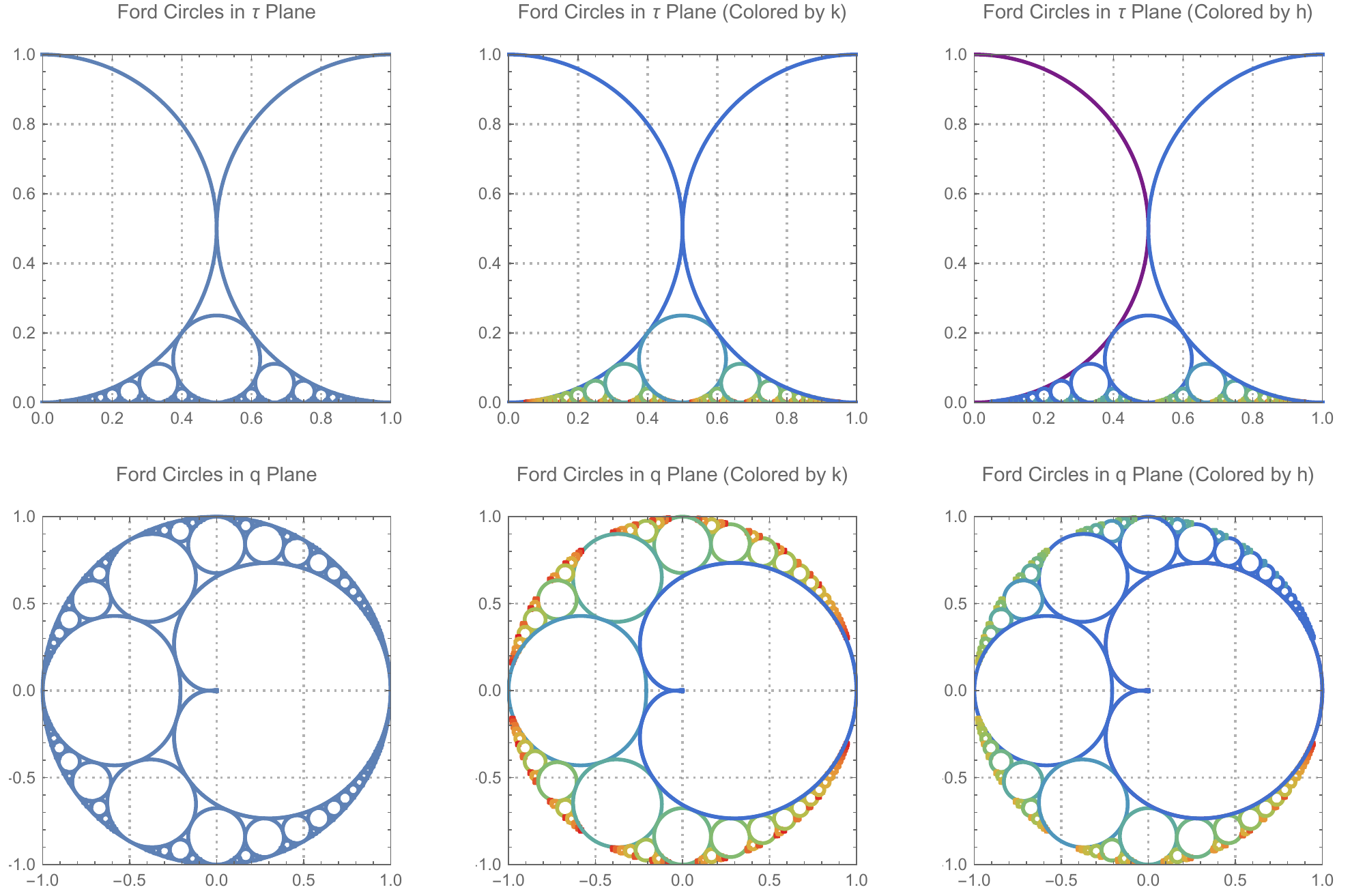}
	\caption{The first few Ford circles in the $\tau$ and $q$ planes, with 
	various color schemes.}
	\label{fig:ford_circles}
\end{figure} 

We define $\tau(\gamma_N)$ to be the concatenation in order of each 
$\tau(\gamma_{N,h/k})$ for $h/k \in \calF_N$. The contour $\gamma_N$ is then 
the mapping of $\tau(\gamma_N)$ into the $q$-plane. This is a concatenation of 
the contours $\gamma_{N,h/k}$ for $h/k \in \calF_N$. We redefine 
$\gamma_{N,0/1}$ to be the concatenation of what we used to call 
$\gamma_{N,0/1}$ and $\gamma_{N,1/1}$. These contours meet in the $q$-plane. 
The full contour $\gamma_N$ is piecewise smooth and has winding number one 
about the origin. See Fig. \ref{fig:rademacher_contour} for visualizations of 
the Rademacher contour $\gamma_N$ and $\tau(\gamma_N)$ for various values of 
$N$.

We now split up the contour integral in Eq. \ref{eq:inv_lap_trans} into a sum of integrals over subcontours:
\begin{equation}
d(n) = \frac{1}{2\pi i} \sum_{k=1}^N \sum_{\substack{1\leq h < k \\ (h,k)=1}} 
\int_{\gamma_{N,h/k}} \frac{Z(q)}{q^{(n-n_0)+1}} \dd q 
\label{eq:subcontour_integral}
\end{equation}
where $(h,k)$ is shorthand for $\gcd(h,k)$. It is convenient to change 
coordinates within each subcontour integral in order to write them as integrals 
over similar contours. Note that for irreducible $h/k$ the coordinate 
transformation 
\begin{equation}
z = - i k^2 \left( \tau - \frac{h}{k} \right) \text{ or equivalently } \tau = i 
\cdot \frac{z}{k^2} + \frac{h}{k}
\label{eq:z_trans_def}
\end{equation}
maps the Ford circle $C(h/k)$ in the $\tau$-plane to the circle $B_{1/2}(1/2)$ 
in the $z$-plane with center $1/2$ and radius $1/2$. See Fig. 
\ref{fig:z_plane_circles}. The point $\widetilde{\tau} (h/k,h_2/k_2)$ is mapped 
to the point 
\begin{equation}
\widetilde{z}_{h/k}(h_2/k_2) = \frac{k^2}{k^2+k_2^2}  + i \left( hk - 
\frac{k^2}{k^2+k_2^2} (hk+h_2k_2)\right).
\end{equation}
We moved the $h/k$ into the subscript of $\widetilde{z}$ to emphasize that the 
coordinate transformation depends on $h/k$ and that, for this reason, unlike 
$\widetilde{\tau}$, $\widetilde{z}$ is not symmetric under interchanging its 
arguments.
The contour $\tau(\gamma_{N,h/k})=\tau(\gamma_{h_0/k_0,h/k,h_2/k_2})$ is mapped 
to an arc along $B_{1/2}(1/2)$ from $\widetilde{z}_{h/k}(h_0/k_0)$ to 
$\widetilde{z}_{h/k}(h_2/k_2)$, specifically the arc which does not contain the 
origin. Likewise, the contours $\tau(\gamma_{0/1,h_2/k_2})$ and 
$\tau(\gamma_{h_0/k_0,1/1})$ are mapped together to an arc along $B_{1/2}(1/2)$ 
from $\widetilde{z}_{1/1}(h_0/k_0)$ to $\widetilde{z}_{0/1}(h_2/k_2)$, also the 
arc which does not contain the origin. Let $\widetilde{z}_{1,N,h/k}$ be 
$\widetilde{z}_{h/k}(h_0/k_0)$ where $h_0/k_0$ is the element of $\calF_N$ 
immediately before $h/k$ if $k\neq 1$ and $\widetilde{z}_{1/1}(h_0/k_0)$ where 
$h_0/k_0$ is the element of $\calF_N$ immediately before $1/1$ if $k=1$. For 
irreducible $h/k$ except $1/1$ let $\widetilde{z}_{2,N,h/k}$ be 
$\widetilde{z}_{h/k}(h_2/k_2)$ where $h_2/k_2$ is the element of $\calF_N$ 
immediately after $h/k$. It can be checked that 
\begin{equation}
\widetilde{z}_{1,N,h/k} = \frac{k}{k^2+k_0^2}(k+ik_0)
\end{equation}
\begin{equation}
\widetilde{z}_{2,N,h/k} = \frac{k}{k^2+k_2^2}(k-ik_2)
\end{equation}
where $h_0/k_0<h/k<h_2/k_2$ are consecutive fractions in $\calF_N$ or if 
$h/k=0/1$ and $h_0/k_0$ is immediately preceding $1/1$ or if $h/k=1/1$ and 
$h_2/k_2$ is immediately following $0/1$. See Fig. \ref{fig:z_plane_circles}.

\begin{figure}[t!]
	\centering
	\includegraphics[width=.95\textwidth]{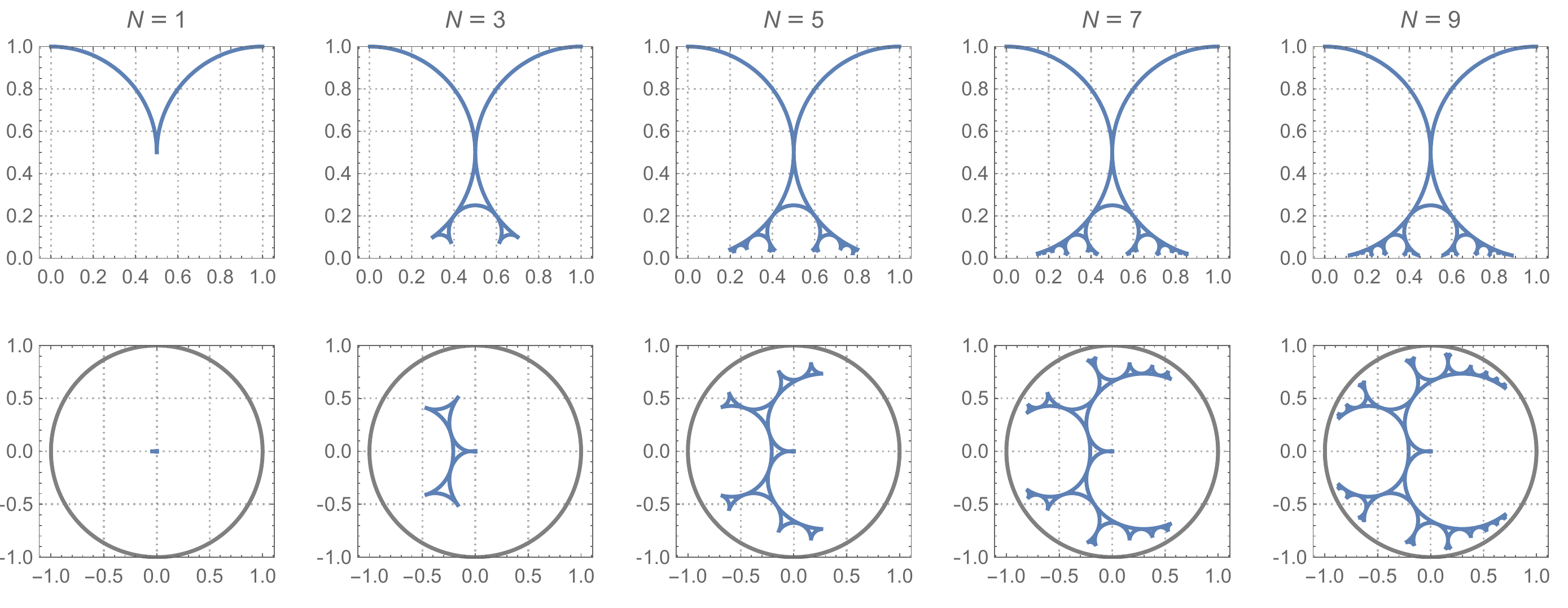}
	\caption{The Rademacher contour $\gamma_N$ for several different values of 
		$N$, in the $\tau$ and $q$ planes.}	
	\label{fig:rademacher_contour}
\end{figure} 

Performing the coordinate 
transformations to Eq. \ref{eq:subcontour_integral}
\begin{equation}
d(n) = i \sum_{k=1}^N k^{-2} \sum_{\substack{0\leq h < k \\ (h,k)=1}} 
\int_{z(\gamma_{N,h/k})} Z \left( \exp\left( 2\pi \left[ i\cdot \frac{h}{k} - 
\frac{z}{k^2} \right] \right) \right) \exp\left( 2 \pi (n-n_0)\left( 
\frac{z}{k^2} -i\cdot \frac{h}{k} \right) \right) \dd z. 
\end{equation}

Here $z(\gamma_{N,h/k})$ is the mapping of $\tau(\gamma_{N,h/k})$ into the 
$z$-plane. That is, $z(\gamma_{N,h/k})$ is the arc of $B_{1/2}(1/2)$ which 
avoids the origin and is from $\widetilde{z}_{1,N,h/k}$ to 
$\widetilde{z}_{2,N,h/k}$. Tracing through the definitions of $z$ and $Z(q)$, 
the 
integrands above are holomorphic in the right-half of the $z$-plane. We may 
therefore deform our subcontours from arcs on $B_{1/2}(1/2)$ to chords through 
$B_{1/2}(1/2)$. These chords begin at $\widetilde{z}_{1,N,h/k}$ and end at 
$\widetilde{z}_{2,N,h/k}$. We denote these chords as $ 
\overline{z_1z_2}(N,h/k)$. See Fig. \ref{fig:z_plane_circles}. So,
\begin{equation}
d(n) = i \sum_{k=1}^N k^{-2} \sum_{\substack{0\leq h < k \\ (h,k)=1}} 
\int_{\overline{z_1z_2}(N,h/k)} Z \left( \exp\left( 2\pi \left[ i\cdot 
\frac{h}{k} - 
\frac{z}{k^2} \right] \right) \right) \exp\left( 2 \pi (n-n_0)\left( 
\frac{z}{k^2} -i\cdot \frac{h}{k} \right) \right) \dd z. 
\label{eq:subcontour_integral_straightened}
\end{equation}

Before we proceed, we state two geometric lemmas. The 
first 
concerns the 
properties of the chords  $\overline{z_1z_2}(N,h/k)$ and the second concerns 
the properties of arcs on $B_{1/2}(1/2)$. Proofs of these are contained in 
\cite{hsu2011partition}.

\begin{lemma}
	The chord $\overline{z_1z_2}(N,h/k)$ has length at most $2\sqrt{2} k/N$ 
	and on this chord $|z| \leq \sqrt{2} k/N$. $\hfill\blacksquare$ 
	\label{lemma:chordprops}
\end{lemma}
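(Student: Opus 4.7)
The plan is to reduce both assertions to direct bounds on the two endpoints $\widetilde{z}_{1,N,h/k}$ and $\widetilde{z}_{2,N,h/k}$ of the chord, and then invoke the triangle inequality. Using the explicit formulas recorded just above the lemma, I compute
\[
|\widetilde{z}_{1,N,h/k}| = \frac{k}{k^2+k_0^2}\,|k+ik_0| = \frac{k}{\sqrt{k^2+k_0^2}},
\]
and analogously $|\widetilde{z}_{2,N,h/k}| = k/\sqrt{k^2+k_2^2}$, where $k_0$ and $k_2$ are the denominators of the Farey neighbors of $h/k$ in $\calF_N$ (with the conventions stipulated in the edge cases $h/k \in \{0/1,1/1\}$).

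The essential input is the standard denominator property of the Farey sequence: for consecutive fractions $h_0/k_0$ and $h/k$ in $\calF_N$ one has $k_0+k > N$, since otherwise the mediant $(h_0+h)/(k_0+k)$ would give a new fraction between them in $\calF_N$; likewise $k+k_2 > N$. Combining this with the elementary inequality $a^2+b^2 \geq (a+b)^2/2$ gives $k^2+k_0^2 \geq (k+k_0)^2/2 > N^2/2$ and similarly for $k_2$, so that
\[
|\widetilde{z}_{1,N,h/k}| \leq \frac{\sqrt{2}\,k}{N}, \qquad |\widetilde{z}_{2,N,h/k}| \leq \frac{\sqrt{2}\,k}{N}.
\]

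The two claims now follow at once. The chord length is bounded by $|\widetilde{z}_{1,N,h/k}| + |\widetilde{z}_{2,N,h/k}| \leq 2\sqrt{2}\,k/N$ by the triangle inequality. For any $z$ on the chord, write $z = (1-t)\widetilde{z}_{1,N,h/k} + t\,\widetilde{z}_{2,N,h/k}$ for some $t \in [0,1]$; convexity of $|\cdot|$ yields $|z| \leq \max\{|\widetilde{z}_{1,N,h/k}|, |\widetilde{z}_{2,N,h/k}|\} \leq \sqrt{2}\,k/N$.

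I do not expect a real obstacle here: the lemma is purely a geometric statement about chords of a circle, and the only non-trivial ingredient is the classical Farey denominator inequality $k_0+k > N$. The mildly finicky part is just verifying the boundary cases $h/k \in \{0/1, 1/1\}$, but these reduce to the same computation once the correct preceding/following fraction in $\calF_N$ (as specified in the definition of $\widetilde{z}_{1,N,h/k}$ and $\widetilde{z}_{2,N,h/k}$) is substituted in.
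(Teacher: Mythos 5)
Your proof is correct, and it is the standard argument (the paper itself omits the proof of this lemma, deferring to \cite{hsu2011partition}, where essentially this same reasoning appears): the explicit endpoint formulas give $|\widetilde{z}_{i,N,h/k}| = k/\sqrt{k^2+k_j^2}$, the Farey mediant property yields $k+k_0>N$ and $k+k_2>N$, and the bounds on the chord follow from convexity and the triangle inequality. No gaps; the edge cases $h/k\in\{0/1,1/1\}$ are handled exactly as you indicate.
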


\begin{lemma}
	In $B_{1/2}(1/2)/\{0\}$, $\Re(z)\leq 1$ and $\Re(1/z)\geq 1$ with 
	$\Re(1/z)=1$ on the circle itself. On the arcs from $0$ to 
	$\widetilde{z}_{1,N,h/k}$ and $\widetilde{z}_{2,N,h/k}$ to $0$,  $|z| \leq 
	\sqrt{2} k/N$
	and the length of these arcs is at most $\pi \sqrt{2} k/N$. 
	$\hfill\blacksquare$
	\label{lemma:arcprops}
\end{lemma}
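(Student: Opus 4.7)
The plan is to dispatch the four assertions in sequence, all via elementary complex-geometric calculations on $B_{1/2}(1/2)$ combined with the standard denominator inequality for Farey neighbors.

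First, for the pair of real-part bounds, parametrize points of $B_{1/2}(1/2)$ as $z = 1/2 + re^{i\theta}$ with $r \in [0, 1/2]$, yielding $\Re(z) = 1/2 + r\cos\theta \leq 1$ immediately. For the reciprocal bound, note the algebraic identity $|z - 1/2|^2 = |z|^2 - \Re(z) + 1/4$, so that membership in $B_{1/2}(1/2)$ is equivalent to $|z|^2 \leq \Re(z)$; combining this with $\Re(1/z) = \Re(z)/|z|^2$ yields $\Re(1/z) \geq 1$ with equality precisely on $\partial B_{1/2}(1/2)$.

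Next, for the modulus bound on the two distinguished arcs, I would compute directly from the formulas for $\widetilde{z}_{1,N,h/k}$ and $\widetilde{z}_{2,N,h/k}$ recorded in the excerpt that
\[
|\widetilde{z}_{1,N,h/k}|^2 = \frac{k^2}{k^2 + k_0^2}, \qquad |\widetilde{z}_{2,N,h/k}|^2 = \frac{k^2}{k^2 + k_2^2}.
\]
Then invoke the standard Farey-sequence fact that consecutive denominators in $\calF_N$ satisfy $k + k_0 > N$ and $k + k_2 > N$ (otherwise the mediant would produce an intermediate fraction in $\calF_N$); the inequality $k^2 + k_0^2 \geq (k + k_0)^2/2$ then gives $|\widetilde{z}_{1,N,h/k}| < \sqrt{2}\,k/N$, and similarly for $\widetilde{z}_{2,N,h/k}$. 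To propagate this bound from the endpoint to the entire sub-arc, I would use the parametrization $z(\theta) = 1/2 + (1/2)e^{i\theta}$, for which $|z(\theta)| = |\cos(\theta/2)|$; since the origin corresponds to $\theta = \pi$, on a sub-arc one of whose endpoints is $0$ the function $|z|$ is monotone and therefore attains its maximum at the other endpoint.

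Finally, for the arc-length bound, the chord from $0$ to a point $w \in \partial B_{1/2}(1/2)$ has length $|w|$ and subtends a central angle of $2\arcsin|w|$ at the center $1/2$ (since the radius is $1/2$), so the corresponding short arc has length $\arcsin|w|$. The elementary inequality $\arcsin x \leq (\pi/2) x$ on $[0, 1]$ combined with the bound $|w| \leq \sqrt{2}\,k/N$ gives an arc length of at most $(\pi/\sqrt{2})\,k/N \leq \pi\sqrt{2}\,k/N$, as claimed (with room to spare). The only steps requiring a little care are applying the correct Farey-neighbor inequality and being explicit that the "arc from $0$" denotes the short sub-arc so that $|z|$ is monotone along it; there is no genuine obstacle anywhere in the argument.
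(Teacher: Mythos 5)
Your argument is correct and complete: the identity $|z-1/2|^2=|z|^2-\Re(z)+1/4$ handles the real-part claims, the Farey-neighbor inequality $k+k_0>N$ together with $k^2+k_0^2\geq(k+k_0)^2/2$ gives the endpoint moduli, monotonicity of $|z(\theta)|=|\cos(\theta/2)|$ propagates the bound along the sub-arcs, and the arc-length estimate via $\arcsin x\leq(\pi/2)x$ even improves the stated constant to $\pi k/(\sqrt{2}N)$. The paper itself omits the proof, deferring to its reference \cite{hsu2011partition}, and your argument is the standard one found there (and in Apostol's and Rademacher's treatments), so there is nothing to add.
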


By the previous two lemmas, for fixed $h/k$ as $N\to\infty$ the chords 
$\overline{z_1z_2}(N,h/k)$ get shorter and nearer to the origin. As $z$ 
approaches the origin, $\tau = (h/k)+i(z/k^2)$ approaches $h/k$. We can 
calculate the asymptotics of $\eta(\tau)$ as $\tau$ approaches $+ i \infty$ 
from the definition of $\eta(\tau)$. We can then calculate the asymptotics of 
$Z(\tau)$ near $h/k$ in terms of the asymptotics of each  $\eta(m\tau)$ near 
$+i\infty$ using the modularity properties of $\eta(\tau)$. These asymptotics 
are sufficiently simple to integrate them. This is the key 
insight in the Hardy-Littlewood-Ramanujan circle method. We now turn to 
expressing $\eta( m \tau)$ for $m\in\{1,2,\ldots\}$ near $h/k$ in terms of 
$\eta(\tau)$ near $+i\infty$.

\begin{figure}[t!]
	\centering
	\includegraphics[width=.8\textwidth]{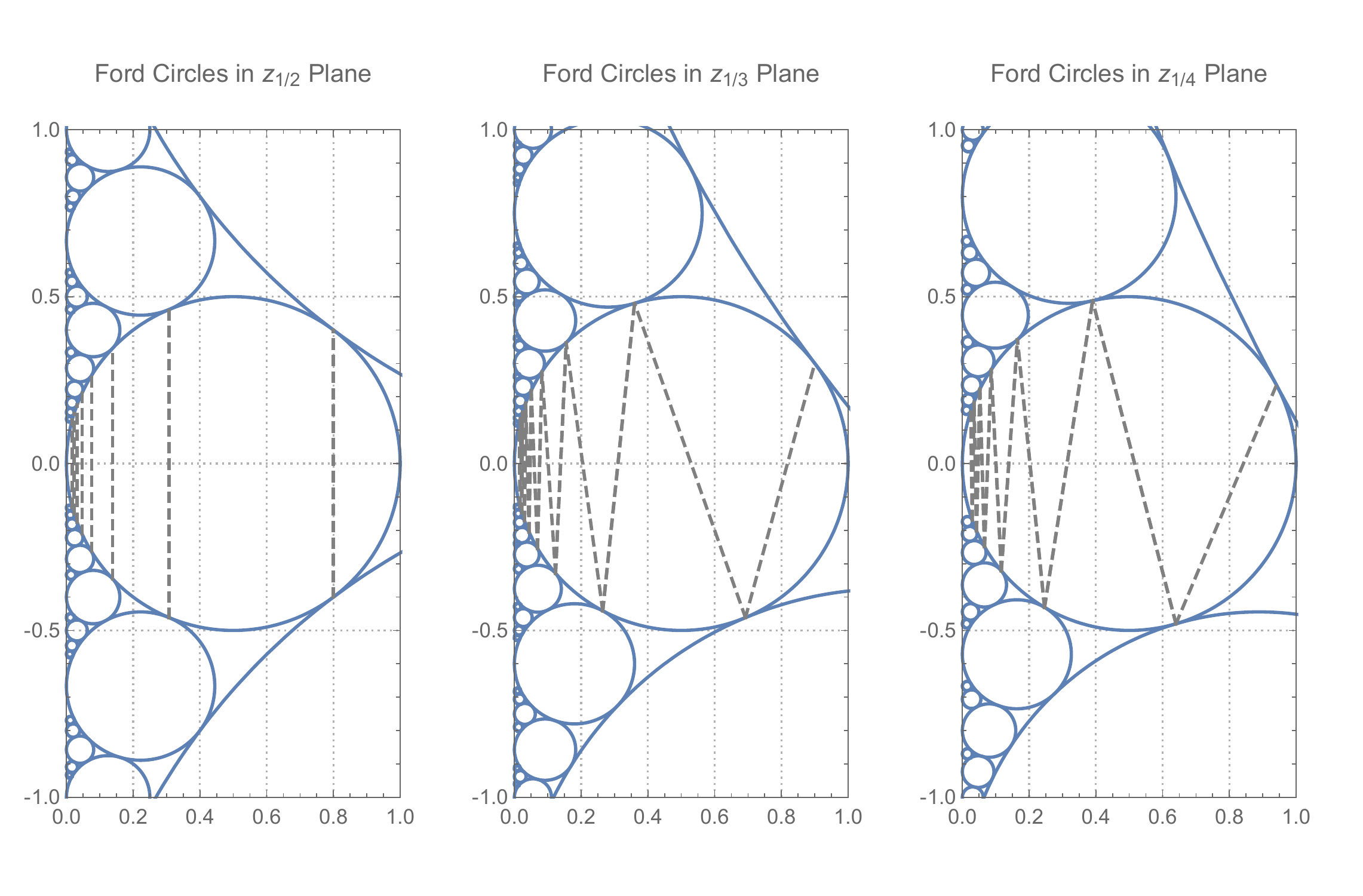}
	\caption{The first few Ford circles in the $z$-plane using the 
		transformation in Eq. 
		\ref{eq:z_trans_def} for $h/k=1/2$ (left), $h/k=1/3$ (center), and 
		$h/k=1/4$ (right). The chords $\overline{z_1z_2}(N,h/k)$ for various 
		$N$ are denoted by 
		gray dashed lines.}
	\label{fig:z_plane_circles}
\end{figure} 

For irreducible fraction $h/k \in[0,1]$, we may find by the Euclidean 
algorithm some integer $H(m,h,k)=H$ such that 
\begin{equation}
m h H \equiv - \gcd(m,k)\mod k.
\end{equation}
It follows that the matrix $M=\begin{pmatrix}a&b\\c&d\end{pmatrix}$ is in 
$\SL_2(\ZZ)$, where
\begin{equation}
	a = H, \quad b = - \frac{1}{k}(mhH+\gcd(m,k)), \quad c = 
	\frac{k}{\gcd(m,k)}, \quad d= -\frac{mh}{\gcd(m,k)}.
\end{equation}
As a member of the modular group, this matrix induces an action on the 
upper-half of the complex plane, given by $M(\tau)= \frac{a\tau+b}{c\tau+d}$ 
for any $\tau\in\HH$. 
Under this action,
\begin{equation}
	M(m\tau) = M\left( m\left[\frac{h}{k}+i \cdot \frac{z}{k^2} \right] 
	\right)=  
	\frac{\gcd(m,k)}{k}\left( H + i \cdot \frac{k}{mz}\gcd(m,k) \right) = 
	\frac{a\tau+b}{c\tau+d}.
\end{equation}
As $z$ approaches $0$ -- and therefore $\tau$ approaches $h/k$ -- the right 
hand side goes 
to 
positive imaginary infinity, as desired. Using the modular transformation 
properties of 
$\eta(\tau)$, 
\begin{equation}
	\eta(m\tau) = [\epsilon(a,b,c,d) (cm\tau+d)^{1/2}]^{-1}\;\eta\left( 
	\frac{am\tau+b}{cm\tau+d} \right).
\end{equation}
In this case, $cm\tau+d = imz/k\gcd(m,k)$ and by Eq. \ref{eq:mult_system}
\begin{equation}
	\epsilon(a,b,c,d)= \exp\left(\pi i \left( 
	\frac{H}{12k}\gcd(m,k)-\frac{mh}{12 
	k}+s\left(\frac{mh}{\gcd(m,k)},\frac{k}{\gcd(m,k)} \right)-\frac{1}{4} 
	\right) \right),
\end{equation} 
so that
\begin{multline}
	\eta\left( m\left(\frac{h}{k}+i \cdot\frac{z}{k^2} \right)\right) = 
	\exp\left(\pi i \left( -\frac{H}{12k}\gcd(m,k)+\frac{mh}{12 
	k}-s\left(\frac{mh}{\gcd(m,k)},\frac{k}{\gcd(m,k)} \right) \right) \right)  
	\\ \times \sqrt{\frac{k\gcd(m,k)}{mz}} 
	\:\eta\left(\frac{\gcd(m,k)}{k}\left( H + i\cdot \frac{k}{mz}\gcd(m,k) 
	\right) 
	\right).
	\label{eq:full_eta_trans}
\end{multline}
The constant $H=H(m,h,k)$ depends implicitly on $m$, $h$, and $k$. Combining 
Eq. \ref{eq:full_eta_trans} for all values of $m$
\begin{equation}
	Z\left( \frac{h}{k}+i\cdot \frac{z}{k^2}\right) = \xi(h,k) \omega(h,k) 
	\cdot 
	z^{c_1}\prod_{m=1}^\infty \eta\left(\frac{\gcd(m,k)}{k}\left( H + i \cdot
	\frac{k}{mz}\gcd(m,k) \right) \right)^{\delta_m} ,
\end{equation}
where $c_1$ was defined in Eq. \ref{eq:constant_defs} and we 
defined
\begin{equation}
	\xi(h,k) = \prod_{m=1}^\infty \left[ 
	\sqrt{\frac{k\gcd(m,k)}{m}}\exp\left(\pi i 
	\left( \frac{mh}{12 k}-s\left(\frac{mh}{\gcd(m,k)},\frac{k}{\gcd(m,k)} 
	\right) \right)\right) \right]^{\delta_m}
	\label{eq:xi_def}
\end{equation}
and
\begin{equation}
	\omega(h,k) = \prod_{m=1}^\infty \exp\left(-\frac{\pi i}{12 k} H 
	\delta_m\gcd(m,k)\right).
	\label{eq:omega_def}
\end{equation}
Plugging in the previous formulas into 
Eq. \ref{eq:subcontour_integral_straightened}:
\begin{multline}
	d(n) = i\sum_{k=1}^N  k^{-2} \sum_{\substack{0\leq h < k \\ (h,k)=1}}
	\xi(h,k)\omega(h,k) \int_{\overline{z_1z_2}(N,h/k)} z^{c_1} 
	\left[ \prod_{m=1}^\infty \eta\left(\frac{\gcd(m,k)}{k}\left( H + i 
	\frac{k}{mz}\gcd(m,k) \right) \right)^{\delta_m} \right] \\ \times
	\exp\left( 2\pi 
	(n-n_0)\left( \frac{z}{k^2}-i \cdot\frac{h}{k}\right)\right) \dd z.
\end{multline}
We expect to be able to replace each $\eta(q)$ in the integrand with 
$q^{1/24}$ and accrue a total $o(1)$ error as $N\to\infty$.  Defining
\begin{equation}
\operatorname{Er}(N) = i\sum_{k=1}^N \sum_{\substack{0\leq h < k \\ 
(h,k)=1}} k^{-2} \xi(h,k) \omega(h,k) 
\int_{\overline{z_1z_2}(N,h/k)}z^{c_1}\exp\left(  2\pi  (n-n_0) \left(  
\frac{z}{k^2}-i\frac{h}{k} \right) \right) \Delta_{h,k}(z)\dd{z},
\label{eq:error_exp}
\end{equation}
where $\Delta_{h,k}(z)$ is the difference between the $\eta$-quotient and its 
leading order asymptotics
\begin{multline}
\Delta_{h,k}(z) = \left[\prod_{m=1}^\infty \eta\left( 
\frac{\gcd(m,k)}{k}\left(H + i 
\frac{k}{mz}\gcd(m,k) \right) \right)^{\delta_m} \right]-\\ 
\left[\prod_{m=1}^\infty 
\exp\left( \frac{\pi i}{12}\frac{\gcd(m,k)}{k}\left(H + i \frac{k}{mz}\gcd(m,k) 
\right) \right)^{\delta_m} \right],
\label{eq:delta_def}
\end{multline}
we can write
\begin{equation}
d(n) = \operatorname{Er}(N)+i\sum_{k=1}^N k^{-2} \sum_{\substack{0\leq h < k \\ 
		(h,k)=1}} \xi(h,k)e^{-2\pi i (n-n_0) 
\cdot \frac{h}{k}}\int_{\overline{z_1z_2}(N,h/k)} z^{c_1}\exp\left[ \frac{2\pi 
(n-n_0)}{k^2}z 
+ 
\frac{\pi}{12}\frac{c_3(k)}{z}\right]\dd{z}.
\label{eq:dn_with_er}
\end{equation}
The second term is the result of replacing each $\eta$-function by the 
appropriate 
asymptotics, and the first term is the accumulated error from doing so.
We wish to show that $\lim_{N\to\infty}\text{Er}(N) = 0$.

Our first task is to bound $\Delta_{h,k}(z)$. We will show that 
$\Delta_{h,k}(z)=O(1)$ in $B_{1/2}(1/2)$ where the bound does not depend on 
$h$, $k$, or $z$. The 
fact that $\Delta_{h,k}(z)$ does not blow up at the origin expresses the fact 
that the polar component of $Z(\tau)$ near $h/k$ contains at most one term, 
that which we are approximating it by. Let $\tilde{\eta}(\tau)= 
\eta(\tau)/q^{1/24}$, so that
\begin{multline}
\Delta_{h,k}(z)= \left[\prod_{m=1}^\infty \exp\left( \frac{\pi 
	i}{12}\frac{\gcd(m,k)}{k}\left(H + i \frac{k}{mz}\gcd(m,k) \right) 
\right)^{\delta_m} \right]\\ \times \left[ \left[\prod_{m=1}^\infty 
\tilde{\eta}\left( 
\frac{\gcd(m,k)}{k}\left(H + i \frac{k}{mz}\gcd(m,k) \right) 
\right)^{\delta_m} \right]-1\right].
\end{multline}
Taking the norm,
\begin{equation}
|\Delta_{h,k}(z)|= \exp\left[ \frac{\pi}{12 } c_3(k) \Re\left( 
\frac{1}{z}\right) \right] \times \left| \prod_{m=1}^\infty 
\tilde{\eta}\left( 
\frac{\gcd(m,k)}{k}\left(H + i \frac{k}{mz}\gcd(m,k) \right) 
\right)^{\delta_m} -1\right|.\label{eq:before_lem_eta}
\end{equation}
If $c_3(k)>0$, the first term on the right hand side can be large for some $z$ 
along $\overline{z_1z_2}(N,h/k)$. However, we expect the second term on the 
right hand side to be small. This is the content of the following lemma.
\begin{lemma}
	For some constant $D$, which may depend only on $\{\delta_m\}_{m=1}^\infty$,
	for $z$ in $\overline{B_{1/2}(1/2)}/\{0\}$,
	\begin{equation}
	\left| \prod_{m=1}^\infty \tilde{\eta}\left( \frac{\gcd(m,k)}{k}\left(H + i 
	\frac{k}{mz}\gcd(m,k) \right) \right)^{\delta_m} -1\right| \leq D 
	\exp\left( -2\pi \Re\left(\frac{1}{z}\right) \min_{m\in\mathcal{M}} \left\{ 
	\frac{\gcd(m,k)^2}{m}\right\} \right). 
	\end{equation}
	\hfill $\blacksquare$
	\label{lem:etabound}
\end{lemma}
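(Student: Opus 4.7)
\medskip
\noindent\textbf{Proof proposal.} The plan is to write everything in terms of the nome. Set
\[
\tau_m = \frac{\gcd(m,k)}{k}\Bigl(H + i\,\frac{k}{mz}\gcd(m,k)\Bigr), \qquad q_m = e^{2\pi i \tau_m},
\]
so that $\tilde{\eta}(\tau_m) = \prod_{n=1}^\infty (1-q_m^n)$. Since $H$ is an integer, $e^{2\pi i H \gcd(m,k)/k}$ has unit modulus, and a direct calculation gives
\[
|q_m| = \exp\!\left(-2\pi \,\frac{\gcd(m,k)^2}{m}\,\Re(1/z)\right).
\]
By Lemma~\ref{lemma:arcprops}, $\Re(1/z)\geq 1$ on $\overline{B_{1/2}(1/2)}\setminus\{0\}$; and since $\calM$ is finite with each $\gcd(m,k)^2/m>0$, there is a uniform $q_0=q_0(\calM)<1$ with $|q_m|\leq q_0$ for all $m\in\calM$, all $k$, all $h$, and all $z$ in question.

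Next I would bound $|\tilde\eta(\tau_m)^{\delta_m}-1|$ by $|q_m|$ up to a constant depending only on $\{\delta_m\}$. The cleanest route is via the logarithm: for $|q|<1$,
\[
\log\!\prod_{n\geq 1}(1-q^n) = -\sum_{n\geq 1}\sum_{j\geq 1}\frac{q^{nj}}{j} = -\sum_{j\geq 1}\frac{1}{j}\cdot\frac{q^j}{1-q^j},
\]
so $|\log\tilde\eta(\tau_m)|\leq -\log(1-|q_m|)/(1-|q_m|)$, which is $\leq C_1(q_0)\,|q_m|$ because $|q_m|\leq q_0<1$. Exponentiating, $|\tilde\eta(\tau_m)^{\delta_m}-1|\leq C_2|q_m|$ for a constant $C_2$ depending only on $q_0$ and $\delta_m$.

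To pass from the factorwise bound to the bound on the product, I would use the telescoping identity
\[
\prod_{m\in\calM}(1+a_m)-1 = \sum_{m\in\calM}a_m\!\!\prod_{\substack{m'\in\calM\\m'<m}}(1+a_{m'}),
\]
applied with $a_m = \tilde\eta(\tau_m)^{\delta_m}-1$. Since $|a_m|\leq C_2 |q_m|\leq C_2 q_0$ and $|\calM|$ is finite, this yields $\bigl|\prod_m\tilde\eta(\tau_m)^{\delta_m}-1\bigr|\leq C_3\max_{m\in\calM}|q_m|$ for a constant $C_3=C_3(\{\delta_m\})$. Finally, bounding each $|q_m|$ against the minimal exponent,
\[
\max_{m\in\calM}|q_m|\leq \exp\!\left(-2\pi\,\Re(1/z)\,\min_{m\in\calM}\frac{\gcd(m,k)^2}{m}\right),
\]
gives the stated inequality with $D=C_3$.

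The main obstacle is uniformity. Any bound needs to be independent of $k$, $h$, $H$, and $z$, and the proposal hinges on the fact that (i) the integer $H$ affects only a phase in $q_m$, (ii) $\Re(1/z)\geq 1$ provides the decisive uniform bound $|q_m|\leq e^{-2\pi\gcd(m,k)^2/m}<1$, and (iii) $\calM$ is finite so that elementary constants like $|\calM|$ and $q_0(\calM)$ may be absorbed into $D$. Provided the product is expanded carefully as above, no further delicate estimate is required.
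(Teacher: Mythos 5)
Your proposal is correct, and it reaches the factorwise estimate by a different mechanism than the paper. The shared skeleton is: show each factor satisfies $\tilde{\eta}(\tau_m)^{\delta_m} = 1 + O(|q_m|)$ with constants depending only on $m$ and $\delta_m$, use $\Re(1/z)\geq 1$ from Lemma \ref{lemma:arcprops} to make everything uniform in $h$, $k$, $z$, and then combine over the finitely many $m\in\mathcal{M}$, replacing each exponent $\gcd(m,k)^2/m$ by the minimum. Where you differ is in how the single-factor bound is obtained. The paper expands the \emph{reciprocal} $\tilde{\eta}(\tau_m)^{-1}=\sum_{j\geq 0}p(j)q_m^j$ as the partition generating function, controls the tail using the classical bound $p(j)=O(\exp(\pi\sqrt{2j/3}))$ together with $\gcd(m,k)^2/m\geq 1/m$, and then inverts to handle positive powers of $\tilde{\eta}$. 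You instead take the logarithm of the Euler product, $\log\prod_{n}(1-q^n)=-\sum_j j^{-1}q^j/(1-q^j)$, bound it by $C|q_m|$ on $|q_m|\leq q_0<1$, and exponentiate; this treats positive and negative $\delta_m$ symmetrically in one stroke and avoids any appeal to partition asymptotics, so it is somewhat more elementary and self-contained. Your explicit telescoping identity for $\prod_m(1+a_m)-1$ also makes precise the step the paper passes over with ``taking the appropriate product.'' Both arguments deliver the same uniformity in $k$, $h$, $H$, and $z$, for the reasons you identify: $H$ enters only as a phase, $\Re(1/z)\geq 1$ gives $|q_m|\leq e^{-2\pi\gcd(m,k)^2/m}\leq e^{-2\pi/m}<1$, and $\mathcal{M}$ is finite.
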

\begin{proof}[Proof of Lemma \ref{lem:etabound}]
	First consider
	\begin{equation}
	\tilde{\eta}\left( \frac{\gcd(m,k)}{k}\left(H + i \frac{k}{mz}\gcd(m,k) 
	\right) \right)^{-1},
	\end{equation}
	which by definition is 
	\begin{equation}
	\sum_{j=0}^\infty p(j) \exp\left(-2\pi j \cdot 
	\frac{\gcd(m,k)^2}{m}\Re\left(\frac{1}{z}\right) \right) \omega(h,m,k,z)^j
	\label{eq:lem_geoseries}
	\end{equation}
	where $p(j)$ is Euler's partition function and $\omega(h,m,k,z)=\omega$ is 
	a phase factor. Recall the following upper bound for $p(j)$, which can be 
	derived by purely 
	classical methods \cite{Apostol}
	\begin{equation}
	p(j) = O\left( \exp\left(\pi\sqrt{\frac{2}{3} \cdot j}\right) \right).
	\label{eq:clas_part_bond}
	\end{equation} 
	The tail of the series in Eq. \ref{eq:lem_geoseries} is therefore bounded 
	above by a convergent geometric series. To be more precise, consider the 
	sum of all but the first two terms in Eq. \ref{eq:lem_geoseries}
	\begin{equation}
	\exp\left(-2\pi  \frac{\gcd(m,k)^2}{m}\Re\left(\frac{1}{z}\right) 
	\right)\omega \sum_{j=2}^\infty p(j) \exp\left(-2\pi (j-1) 
	\frac{\gcd(m,k)^2}{m}\Re\left(\frac{1}{z}\right) \right) \omega^{j-1}.
	\label{eq:sumofallbutfirsttwoterms}
	\end{equation}
	Using the triangle inequality and Lemma \ref{lemma:arcprops}, 
	\begin{equation}
	\left| \sum_{j=2}^\infty p(j) \exp\left(-2\pi (j-1) 
	\frac{\gcd(m,k)^2}{m}\Re\left(\frac{1}{z}\right) \right) \omega^{j-1} 
	\right| 
	\leq \sum_{j=1}^\infty p(j) \exp\left(-2\pi (j-1)
	\frac{\gcd(m,k)^2}{m}\right). 
	\end{equation}
	Using the classical bound on $p(j)$ in Eq. \ref{eq:clas_part_bond},
	\begin{equation}
	\sum_{j=1}^\infty p(j) \exp\left(-2\pi (j-1)
	\frac{\gcd(m,k)^2}{m}\right) 
	\leq C \sum_{j=2}^\infty  \exp\left(\pi\sqrt{\frac{2}{3} \cdot j}-2\pi 
	(j-1) \cdot
	\frac{\gcd(m,k)^2}{m} \right)
	\end{equation}
	for some absolute constant $C$.
	Therefore, since $\gcd(m,k) / m \geq 1/m$,
	\begin{equation}
	\left| \sum_{j=2}^\infty p(j)\exp\left(-2\pi (j-1) 
	\frac{\gcd(m,k)^2}{m}\Re\left(\frac{1}{z}\right) \right) \omega^{j-1} 
	\right| 
	\leq C \sum_{j=2}^\infty  \exp\left(\pi\sqrt{\frac{2}{3} \cdot 
		j}-\frac{2\pi}{m} 
	(j-1)  
	\right). 
	\end{equation}
	This is bounded above by a convergent geometric series, so that for some 
	constant $C_m>0$ which is dependent only on $m$,
	\begin{equation}
	\left| \sum_{j=2}^\infty p(j) \exp\left(-2\pi (j-1) 
	\frac{\gcd(m,k)^2}{m}\Re\left(\frac{1}{z}\right) \right) \omega^{j-1} 
	\right| 
	\leq C_m.
	\end{equation}
	It follows that Eq. \ref{eq:sumofallbutfirsttwoterms} is bounded 
	above in magnitude by
	\begin{equation}
	C_m \exp\left(-2\pi  \frac{\gcd(m,k)^2}{m}\Re\left(\frac{1}{z}\right) 
	\right).
	\end{equation}
	Therefore, replacing the tail in Eq. \ref{eq:lem_geoseries} with this bound,
	\begin{equation}
	\tilde{\eta}\left( \frac{\gcd(m,k)}{k}\left(H + i \frac{k}{mz}\gcd(m,k) 
	\right) \right)^{-1}= 1 + O(1)\exp\left(-2\pi  
	\frac{\gcd(m,k)^2}{m}\Re\left(\frac{1}{z}\right) \right),
	\label{eq:lem_recip_bound}
	\end{equation}
	where the $O(1)$ term satisfies $|O(1)| \leq C_m+1$. Taking the reciprocal 
	of Eq. \ref{eq:lem_recip_bound}, and using that $\Re(1/z)\geq 1$ within 
	$B_{1/2}(1/2)$, yields
	\begin{equation}
	\tilde{\eta}\left( \frac{\gcd(m,k)}{k}\left(H + i \frac{k}{mz}\gcd(m,k) 
	\right) \right) = 1 +O(1) \exp\left(-2\pi  
	\frac{\gcd(m,k)^2}{m}\Re\left(\frac{1}{z}\right) \right)
	\label{eq:lem_bound}
	\end{equation}
	for some other $O(1)$ term which can be bounded in magnitude depending only 
	on $m$.
	Taking the appropriate product of Eq. \ref{eq:lem_recip_bound} and Eq. 
	\ref{eq:lem_bound} for all $m$ yields 
	\begin{equation}
	\prod_{m=1}^\infty \tilde{\eta}\left( \frac{\gcd(m,k)}{k}\left(H + i 
	\frac{k}{mz}\gcd(m,k) \right) \right)^{\delta_m} = 1 +O(1)\exp\left( - 2 
	\pi \min_{m\in\mathcal{M}}\left\{\frac{\gcd(m,k)^2}{m}\right\} 
	\Re\left(\frac{1}{z}\right) \right),
	\end{equation}
	where $\mathcal{M}$ is the (finite) set of all $m\in\NN$ such that 
	$\delta_m$ is nonzero, and the $O(1)$ term is bounded in magnitude 
	depending 
	only on 
	$\{\delta_m\}_{m=1}^\infty$.
	Consequently, for some constant $D>0$ depending only on 
	$\{\delta_m\}_{m=1}^\infty$
	\begin{equation}
	\left|\prod_m\tilde{\eta}\left( \frac{\gcd(m,k)}{k}\left(H + i 
	\frac{k}{mz}\gcd(m,k) \right) \right)^{\delta_m} - 1 \right| \leq D 
	\exp\left( - 2 \pi 
	\min_{m\in\mathcal{M}}\left\{\frac{\gcd(m,k)^2}{m}\right\} 
	\Re\left(\frac{1}{z}\right) \right).
	\end{equation}
\end{proof}
Using Lemma \ref{lem:etabound},
\begin{equation}
|\Delta_{h,k}(z)| \leq D \exp\left[-2\pi \Re\left(\frac{1}{z}\right)\left(  
\min_{m\in\mathcal{M}} \left\{ 
\frac{\gcd(m,k)^2}{m}\right\} - \frac{c_3(k)}{24}\right) \right]= D 
\exp\left[-2\pi 
\Re\left(\frac{1}{z}\right) g(k) \right].
\label{eq:delta_bd_wg}
\end{equation}
One of the hypotheses of Theorem \ref{main} is that the function $g(k)$ is 
non-negative. 
Then, using Lemma \ref{lemma:arcprops}, we can bound 
\begin{equation}
|\Delta_{h,k}(z)| \leq D \exp \left[- 2\pi \min\{g(k) : k=1,\ldots,\lcm 
(\calM)\} \right],
\label{eq:delta_bound}
\end{equation}
and the constant on the right hand side depends only on 
$\{\delta_m\}_{m=1}^\infty$. We redefine $D$ to be this constant.
Using Eq. \ref{eq:delta_bound} and Lemma 
\ref{lemma:chordprops}, the integral in Eq. \ref{eq:error_exp} is 
bounded above in magnitude:
\begin{multline}
\left|\int_{\overline{z_1z_2}(N,h/k)}z^{c_1}\exp\left(  2\pi  (n-n_0) \left(  
\frac{z}{k^2}-i\frac{h}{k} \right) \right) \Delta_{h,k}(z)\dd{z}\right|\\ \leq 
2D\left( \sqrt{2}\cdot 
\frac{k}{N} \right)^{c_1+1} \exp\left( 
2\sqrt{2} 
\pi (n-n_0) \cdot \frac{k}{N} \right).
\end{multline}
Substituting this into the definition of $\text{Er}(N)$ in Eq. 
\ref{eq:error_exp}, for some 
constant $C$ depending only on 
$\{\delta_m\}_{m=1}^\infty$,
\[
|\text{Er}(N)| \leq C e^{2\sqrt{2}\pi n} N^{-(c_1+1)}\sum_{k=1}^N 
\sum_{\substack{0\leq h < k \\ (h,k) = 1}} k^{-1} .
\]
Since there are at most $k$ terms in the inner 
sum and $N$ terms in the outer sum, we can bound this 
\begin{equation}
	\label{eq:erboundabc}
	|\text{Er}(N)| \leq C e^{2\sqrt{2}\pi n} N^{-c_1}. 
\end{equation}
Since $c_1>0$, this shows that $\lim_{N\to\infty} \text{Er}(N) = 0$, as 
desired. Referring back to Eq. \ref{eq:dn_with_er}, we have shown that
\begin{equation}
d(n) = o(1)+i\sum_{k=1}^N k^{-2}\sum_{\substack{0\leq h < k \\ (h,k)=1}}  
\xi(h,k)e^{-2\pi i (n-n_0)
h/k}\int_{\overline{z_1z_2}(N,h/k)} 
z^{c_1}\exp\left[ \frac{2\pi (n-n_0)}{k^2}z + 
\frac{\pi}{12}\frac{c_3(k)}{z}\right]\dd{z}.
\end{equation}
We now deform our contours back to arcs along $B_{1/2}(1/2)$:
\begin{equation}
d(n) = o(1)+i\sum_{k=1}^Nk^{-2} \sum_{\substack{0\leq h < k \\ (h,k)=1}}
\xi(h,k)e^{-2\pi i 
(n-n_0) h/k}\int_{z(\gamma_{N,h/k})} 
z^{c_1}\exp\left[ \frac{2\pi (n-n_0)}{k^2}z + 
\frac{\pi}{12}\frac{c_3(k)}{z}\right]\dd{z}.
\end{equation}
Our next goal is to show that the main term on the right hand side above,
\begin{align}
i\sum_{k=1}^Nk^{-2}\sum_{\substack{0\leq h < k \\ (h,k)=1}} \xi(h,k)e^{-2\pi i 
(n-n_0) 
h/k}\int_{z(\gamma_{N,h/k})} 
z^{c_1}\exp\left[ \frac{2\pi (n-n_0)}{k^2}z + 
\frac{\pi}{12}\frac{c_3(k)}{z}\right]\dd{z},
\end{align}
differs from
\begin{equation}
i\sum_{k=1}^Nk^{-2}\sum_{\substack{0\leq h < k \\ (h,k)=1}} \xi(h,k)e^{-2\pi i 
(n-n_0) 
h/k}\int_{B_{1/2}(1/2)} 
z^{c_1}\exp\left[ \frac{2\pi (n-n_0)}{k^2}z + 
\frac{\pi}{12}\frac{c_3(k)}{z}\right]\dd{z} 
\end{equation}
by an $o(1)$ term as $N\to\infty$. The contour in the second expression is 
traversed clockwise. In 
other words, we may replace our integrals over incomplete arcs of 
$B_{1/2}(1/2)$ by integrals over the complete circle $B_{1/2}(1/2)$ and only 
accrue a total $o(1)$ 
error as $N\to\infty$.
The former is the latter minus $J_1+J_2$, where $J_1=J_1(N)$ and $J_2=J_2(N)$ 
are defined by
\begin{equation}
J_1 = i\sum_{k=1}^N\sum_{\substack{0\leq h < k \\ (h,k)=1}}k^{-2} 
\xi(h,k)e^{-2\pi i (n-n_0) 
h/k}\int_{0}^{\tilde{z}_1(N,h/k)} 
z^{c_1}\exp\left[ \frac{2\pi (n-n_0)}{k^2}z + 
\frac{\pi}{12}\frac{c_3(k)}{z}\right]\dd{z},
\end{equation}
\begin{equation}
J_2 = i\sum_{k=1}^N\sum_{\substack{0\leq h < k \\ (h,k)=1}}k^{-2} 
\xi(h,k)e^{-2\pi i (n-n_0) 
h/k}\int_{\tilde{z}_2(N,h/k)}^0 
z^{c_1}\exp\left[ \frac{2\pi (n-n_0)}{k^2}z + 
\frac{\pi}{12}\frac{c_3(k)}{z}\right]\dd{z} .
\end{equation}
The integrals are on arcs of $B_{1/2}(1/2)$. 
We bound $J_1$. Using the bound $|\xi(h,k)| \leq k^{-c_1}$,
\begin{equation}
|J_1| \leq \sum_{k=1}^N\sum_{\substack{0\leq h < k \\ 
(h,k)=1}}k^{-(2+c_1)}\left|\int_{0}^{\tilde{z}_1(N,h/k)} z^{c_1}\exp\left[ 
\frac{2\pi (n-n_0)}{k^2}z + \frac{\pi}{12}\frac{c_3(k)}{z}\right]\dd{z}\right|.
\end{equation}
Using Lemma \ref{lemma:arcprops},
\begin{equation}
|J_1| \leq \pi 
\sum_{k=1}^N\sum_{\substack{0\leq h < k \\ (h,k)=1}}k^{-(2+c_1)} 
 \left( 
\frac{\sqrt{2} k}{N} \right)^{c_1+1} \exp\left[ \frac{2\pi (n-n_0)}{k^2} + 
\frac{\pi}{12} c_3(k) \right].
\end{equation}
So, for some constant $C$ depending only on $\{\delta_m\}_{m=1}^\infty$,
\begin{equation}
|J_1| \leq C\sum_{k=1}^N\sum_{\substack{0\leq h < k \\ (h,k)=1}} k^{-1} 
\left(\frac{1}{N}\right)^{c_1+1}   \exp\left[ 
2\pi (n-n_0)  \right].
\end{equation}
Since the outer sum is over at most $N$ terms and the inner sum is over at most 
$k$ terms,
\begin{equation}
|J_1| \leq CN^{-c_1}  \exp\left[ 2\pi (n-n_0) \right].
\end{equation}
Since $c_1>0$, $J_1 = o(1)$, as desired. An identical 
argument yields $J_2 = o(1)$.
Combining all of the previous results,
\begin{equation}
d(n) = o(1)+i\sum_{k=1}^N k^{-2}  \sum_{\substack{0\leq h < k \\ \gcd(h,k)=1}}
\xi(h,k) e^{-2\pi i (n-n_0) \cdot \frac{h}{k}}\int_{B_{1/2}(1/2)} z^{c_1} 
\exp\left[ \frac{2\pi (n-n_0)}{k^2}z + \frac{\pi}{12} \frac{c_3(k)}{z} 
\right]\dd{z}.
\end{equation}
Taking $N\to\infty$,
\begin{equation}
d(n) = i\sum_{k=1}^\infty k^{-2} \sum_{\substack{0\leq h < k \\ \gcd(h,k)=1}}  
\xi(h,k) e^{-2\pi i (n-n_0) \cdot \frac{h}{k}}\int_{B_{1/2}(1/2)}  z^{c_1} 
\exp\left[ \frac{2\pi (n-n_0)}{k^2}z + \frac{\pi}{12} \frac{c_3(k)}{z} 
\right]\dd{z}.
\end{equation}
Referring to the definition of $\xi(h,k)$ in Eq. \ref{eq:xi_def} and of the 
Kloosterman-like sum $A_k(n)$ in Eq. \ref{eq:klooster_defs}, this is exactly
\begin{equation} \label{eq:finalComFirst}
d(n) = i\sum_{k=1}^\infty k^{-(2+c_1)} c_2(k) A_k(n)\int_{B_{1/2}(1/2)}  
z^{c_1} \exp\left[ \frac{2\pi (n-n_0)}{k^2}z + \frac{\pi}{12} \frac{c_3(k)}{z}
\right]\dd{z},
\end{equation}
Now we just evaluate this integral 
\begin{equation}
I = \int_{B_{1/2}(1/2)}  z^{c_1} \exp\left[ \frac{2\pi 
	(n-n_0)}{k^2}z + 
\frac{\pi}{12} \frac{c_3(k)}{z} \right]\dd{z}.
\end{equation}
First note that if $c_3(k)=0$, then the integrand is everywhere holomorphic so 
that by the Cauchy integral formula $I=0$. Otherwise, we can rewrite it as
\begin{equation}
I=\int_{B_{1/2}(1/2)}  z^{c_1} \exp\left[ \frac{\pi}{k} 
\sqrt{\frac{|c_3(k)|}{6}(n-n_0) }\left( \left( \frac{z}{k} \sqrt{ \frac{24 
(n-n_0)}{ |c_3(k)|} 
} \right)  \pm \left( \frac{z}{k} \sqrt{ \frac{24 
(n-n_0)}{ |c_3(k)|} }  \right) 
^{-1}\right)\right]\dd{z} 
\end{equation}
with the $\pm$ given by $\operatorname{sign}(c_3(k))$.
We make the substitution
\begin{equation}
w = \left( \frac{z}{k} \sqrt{ \frac{24 (n-n_0)}{ |c_3(k)|} }  \right) ^{-1}, 
\quad z = 
\left( \frac{w}{k} \sqrt{ \frac{24 (n-n_0)}{ |c_3(k)|} }  \right) ^{-1}, \quad 
\dd{z}=- 
\left( \frac{w^2}{k} \sqrt{ \frac{24 (n-n_0)}{ |c_3(k)|} }  \right) ^{-1} 
\dd{w},
\end{equation}
whence
\begin{equation}
I = -\left(k\sqrt{ \frac{ |c_3(k)|}{24 (n-n_0)} }  \right)^{c_1+1} 
\int^{1+i\infty}_{1-i\infty}
w^{-(c_1+2)} 
\exp\left[ \frac{\pi}{k} \sqrt{\frac{|c_3(k)|}{6}(n-n_0) }\left(  w^{-1} \pm 
w\right)\right] \dd{w}.
\end{equation}
We now split into two cases depending on the sign of $c_3(k)$. If $c_3(k)<0$, 
then the integrand decays sufficiently rapidly in the right-half plane such 
that 
\begin{equation}
I = -\left(k\sqrt{ \frac{ |c_3(k)|}{24 (n-n_0)} }  \right)^{c_1+1} 
\lim_{R\to\infty}\int_{S(R)}
w^{-(c_1+2)} 
\exp\left[ \frac{\pi}{k} \sqrt{\frac{|c_3(k)|}{6}(n-n_0) }\left(  w^{-1}- 
w\right)\right] \dd{w}
\end{equation}
where $S(R)$ is the right semicircle of radius $R$, centered at $1$, traversed 
clockwise. The 
integrand is holomorphic inside this contour, since it does not contain the 
origin, so that by the Cauchy integral 
formula $I = 0$. Otherwise, if $c_3(k)>0$, then the integrand decays 
sufficiently rapidly in the 
left-half plane such that 
\begin{equation}
I = -\left(k\sqrt{ \frac{| c_3(k)|}{24 (n-n_0)} }  \right)^{c_1+1} 
\oint
w^{-(c_1+2)} 
\exp\left[ \frac{\pi}{k} \sqrt{\frac{|c_3(k)|}{6}(n-n_0) }\left(  w^{-1} + 
w\right)\right] \dd{w}
\end{equation}
for any positively oriented closed contour winding once around the origin.
We rearrange the terms in the integral slightly:
\begin{equation}
I = -2\pi i\left(k\sqrt{ \frac{ |c_3(k)|}{24 (n-n_0)} }  \right)^{c_1+1}
\frac{1}{2\pi i} \oint w^{-(c_1+1)-1} \exp\left[ \frac{1}{2}\frac{\pi}{k} 
\sqrt{\frac{2}{3}|c_3(k)|(n-n_0) }\left(  w^{-1} + w\right)\right] \dd{w}.
\end{equation}
This integral is a standard form of the modified Bessel function of the first 
kind 
\cite{boas2006mathematical}\cite{weisstein2006modified}:
\begin{equation}
I = -2\pi i\left(k\sqrt{ \frac{ |c_3(k)|}{24 (n-n_0)} }  \right)^{c_1+1} 
I_{1+c_1}\left[ \frac{\pi}{k}\sqrt{\frac{2}{3}|c_3(k)| (n-n_0)} \right]
\end{equation}
where $I_{c_1+1}$ is the modified Bessel function of the first kind of weight 
$c_1$. To summarize,
\begin{equation}
I = 
\begin{cases}
0 & (c_3(k)\leq 0), \\ 
-2\pi i\left(k\sqrt{ \frac{ c_3(k)}{24 (n-n_0)} }  \right)^{c_1+1} 
I_{1+c_1}\left[ \frac{\pi}{k}\sqrt{\frac{2}{3}c_3(k)(n-n_0)} \right] & 
(c_3(k)>0).
\end{cases}
\end{equation}
Substituting $I$ into Eq. \ref{eq:finalComFirst} and simplifying, we get our 
final expression
\begin{equation}
d(n) = 2\pi \left( \frac{1}{24 (n-n_0)} 
\right)^{\frac{c_1+1}{2}}\sum_{\substack{k=1 \\ c_3(k)>0}}^\infty  
c_2(k) c_3(k)^{\frac{c_1+1}{2}} k^{-1} A_k(n)  
I_{1+c_1}\left[ 
\frac{\pi}{k}\sqrt{\frac{2}{3}c_3(k) (n-n_0)} \right]. 
\label{eq:proof_final_form}
\end{equation} 
\hfill $\square$

\section{Asymptotics} \label{sec:asymp}

We would like to extract useful asymptotics from Eq. \ref{eq:proof_final_form}. 
These are contained in the following proposition. For this section we assume 
that the hypotheses of Theorem \ref{main} are satisfied, so that Eq. 
\ref{eq:1_rademacher_main} applies.

\begin{proposition}\label{prop:asymp}
	Let $\calK\subset \NN$ be the set of $k$ that maximize $c_3(k)/k^2$ and 
	let $c_3>0$ be the maximum value. For any $\epsilon>0$, there exists some 
	constant $C>0$ which 
	may depend only on $\{\delta_m\}_{m=1}^\infty$ such that for 
	all $n\in\NN$ 
	with $n>n_0$ and 
	\begin{equation}
	\left|\sum_{k\in\calK} c_2(k) k^{c_1} A_k(n)\right| > \epsilon 
	\label{eq:asympt_nonzero_const}
	\end{equation}
	it is the case that 
	\begin{equation}
	d(n) = (1+O(e^{-C\sqrt{n}}))\cdot 2\pi 
	\left(\frac{c_3}{24(n-n_0)}\right)^{\frac{c_1+1}{2}} I_{1+c_1}\left[ \pi 
	\sqrt{\frac{2}{3}c_3 (n-n_0)}\right] \sum_{k\in\calK} c_2(k) k^{c_1} 
	A_k(n). 
	\end{equation}
	The dependence on $\epsilon$ is in the bounding coefficient of the 
	$O(e^{-C\sqrt{n}})$.
	$\hfill\blacksquare$
\end{proposition}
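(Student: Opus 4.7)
The plan is to extract from Eq.~\ref{eq:1_rademacher_main} the contribution of the indices $k \in \calK$ and show that the remainder is smaller by a factor of $e^{-C\sqrt n}$. First I would exploit the key simplification that for $k \in \calK$ the relation $c_3(k)/k^2 = c_3$ makes the Bessel argument $\tfrac{\pi}{k}\sqrt{\tfrac{2}{3}c_3(k)(n-n_0)}$ collapse to $\pi\sqrt{\tfrac{2}{3}c_3(n-n_0)}$ independently of $k$, while rewriting $c_3(k)^{(c_1+1)/2}k^{-1}$ as $c_3^{(c_1+1)/2}k^{c_1}$. Consequently the $k \in \calK$ portion of the series factors as
\begin{equation*}
M(n) := 2\pi\left(\frac{c_3}{24(n-n_0)}\right)^{(c_1+1)/2} I_{1+c_1}\!\left[\pi\sqrt{\tfrac{2}{3}c_3(n-n_0)}\right]\sum_{k \in \calK} c_2(k)\, k^{c_1} A_k(n),
\end{equation*}
which is exactly the asserted main term. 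Denote by $B(n)$ the Bessel prefactor appearing in $M(n)$.

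The bulk of the argument is to control the tail over $k \notin \calK$ in Eq.~\ref{eq:1_rademacher_main}. I would split this tail at a cutoff $K_0 = K_0(\{\delta_m\})$, chosen so large---using that $c_3(k)$ is bounded by periodicity---that $c_3(k)/k^2 \leq c_3/2$ for every $k > K_0$. On the finite set $\{k \leq K_0 : k \notin \calK\}$ there is $\delta > 0$ with $c_3(k)/k^2 \leq c_3 - \delta$. Applying the standard large-argument Bessel asymptotic $I_\nu(x) = e^x(2\pi x)^{-1/2}(1 + O(1/x))$ to each such term and to $B(n)$, the ratio is at most $\exp[\pi\sqrt{2(n-n_0)/3}(\sqrt{c_3-\delta} - \sqrt{c_3})] = e^{-C_1\sqrt{n-n_0}}$, up to polynomial prefactors in $n$.

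For $k > K_0$ I would instead use the small-argument asymptotic $I_{1+c_1}(x) = O(x^{1+c_1})$, together with the facts that $|c_2(k)|$ and $|c_3(k)|$ are uniformly bounded (both are periodic in $k$ modulo $\lcm(\calM)$) and that $|A_k(n)| \leq \varphi(k) \leq k$. Each such term is then $O(k^{-(1+c_1)}(n-n_0)^{(1+c_1)/2})$, which since $c_1 > 0$ sums over $k > K_0$ to something polynomial in $n$; compared to the exponential growth $e^{\pi\sqrt{2c_3(n-n_0)/3}}$ of $B(n)$ this is again $e^{-C_2\sqrt{n-n_0}} B(n)$. Combining both estimates and setting $C = \tfrac{1}{2}\min(C_1, C_2)$ yields $|d(n) - M(n)| \leq C_0 e^{-C\sqrt n} B(n)$ with $C_0$ depending only on $\{\delta_m\}$. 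Dividing through by $|\sum_{k\in\calK} c_2(k)k^{c_1} A_k(n)| > \epsilon$ absorbs the factor $1/\epsilon$ into the implied constant and delivers $d(n) = (1 + O(e^{-C\sqrt n})) M(n)$, as desired.

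The main obstacle I anticipate is careful bookkeeping of the polynomial prefactors so that $C$ ends up independent of both $n$ and $\epsilon$, with the $\epsilon$-dependence isolated in the implied constant through this final division. Everything else---uniform Bessel asymptotics and crude termwise bounds on $c_2(k)$, $c_3(k)$, and $A_k(n)$---is off the shelf.
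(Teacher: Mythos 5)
Your overall strategy --- isolate the $k\in\calK$ terms as the main term (using $c_3(k)=k^2c_3$ there so that a single Bessel factor is common to all of them) and show that the complementary part of the series is exponentially small relative to that Bessel factor --- is the same as the paper's, and your treatment of the finitely many $k\le K_0$ with $k\notin\calK$ via the gap $c_3(k)/k^2\le c_3-\delta$ and the large-argument asymptotics of $I_{1+c_1}$ is sound. The structural difference is in how the infinite tail is handled: the paper groups $k$ by residue class modulo $\lcm(\calM)$ and uses the termwise series inequality $I_{1+c_1}(x/k)\le (k_0/k)^{1+c_1}I_{1+c_1}(x/k_0)$ for $k\ge k_0$, which is valid for every $x>0$ and sums immediately against $\zeta(1+c_1)$; you instead split at a fixed cutoff $K_0$ and switch asymptotic regimes there.

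That switch is where your argument has a genuine gap. The bound $I_{1+c_1}(x)=O(x^{1+c_1})$ holds only for $x$ in a bounded range; for large $x$ one has $I_{1+c_1}(x)\sim e^x/\sqrt{2\pi x}$. For any fixed $k>K_0$ with $c_3(k)>0$, the Bessel argument $\frac{\pi}{k}\sqrt{\frac{2}{3}c_3(k)(n-n_0)}$ tends to infinity with $n$, so the corresponding term grows like $\exp(C_k\sqrt{n})$, and the tail over $k>K_0$ is certainly not ``polynomial in $n$'' as you claim. The conclusion you want is still true, but it requires a bound valid uniformly in the argument, e.g.\ $I_{1+c_1}(x)\le \frac{(x/2)^{1+c_1}}{\Gamma(c_1+2)}\,e^{x}$, which follows at once from the defining series. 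Since $c_3(k)/k^2\le c_3/2$ for $k>K_0$, the exponential factor in each such term is at most $\exp\left(\pi\sqrt{\tfrac{1}{3}c_3(n-n_0)}\right)$, whose growth rate is a factor $1/\sqrt{2}$ below that of $B(n)$; the power prefactor, combined with $|A_k(n)|\le k$ and the boundedness of $c_2(k),c_3(k)$, leaves the convergent sum $\sum_{k>K_0}k^{-(1+c_1)}$ (here $c_1>0$ is used), so the whole tail is indeed $e^{-C_2\sqrt{n-n_0}}B(n)$ up to a polynomial. With that repair the rest of your write-up, including the final division by the hypothesized lower bound $\epsilon$, goes through.
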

The proof of Prop. \ref{prop:asymp} is straightforward and an exercise in using 
the asymptotics of the 
modified Bessel functions. 
\begin{proof}
	Note that $c_3(k)$ is periodic with period $\lcm(\calM)$. So, 
	$\calK\subseteq \{1,\ldots,\lcm(\calM)\}$. We first break 
	up the Rademacher series in Eq. \ref{eq:1_rademacher_main} into 
	$\lcm(\calM)$ sums, one for each 
	possible value of $k$ modulo $\lcm(\calM)$. We then show that each of 
	these smaller sums is exponentially dominated by the leading term. We then 
	absorb the resulting Bessel functions with $k\not\in \calK$ into those with 
	$k\in\calK$. In the following, we will use the 
	result that if $0<a<b$ then $I(ax)$ is exponentially 
	dominated by $I(bx)$ for any positive $x$ and any positive weight modified 
	Bessel function $I$ of the first kind. 
	So, we first consider for fixed $b\in\{1,\ldots,\lcm(\calM)\}$ with 
	$c_3(b)>0$
	\begin{equation}
	\sum_{\substack{k \in [b]\\ k>0}} c_2(k)c_3(k)^{(c_1+1)/2} 
	A_k(n)k^{-1} I_{1+c_1}\left[\frac{\pi}{k} \sqrt{\frac{2}{3} c_3(k) 
	(n-n_0)}\right]
	\end{equation}
	where $[b]$ is the equivalence class of integers modulo $\lcm(\calM)$.
	Because $c_2(k),c_3(k)$ have period $\lcm(M)$, this is 
	\begin{equation}
	c_2(b)c_3(b)^{(c_1+1)/2}
	\sum_{\substack{k \in [b]\\ k>0}}  A_k(n) 
	k^{-1}I_{1+c_1}\left[\frac{\pi}{k} \sqrt{\frac{2}{3} c_3(b) (n-n_0)}\right].
	\label{allbterms}
	\end{equation}
	We wish to show that the sum above is dominated by the first term. Consider 
	the rest of the terms, 
	\begin{equation}
	\sum_{\substack{k \in [b]\\ k>\lcm(\calM)}} 
	A_k(n) k^{-1}I_{1+c_1}\left[\frac{\pi}{k} \sqrt{\frac{2}{3} c_3(b) 
	(n-n_0)}\right],
	\end{equation}
	which is bounded above in absolute value by
	\begin{equation}
	\sum_{\substack{k \in [b]\\ k>\lcm(\calM)}} 
	\left|I_{1+c_1}\left[\frac{\pi}{k} \sqrt{\frac{2}{3} c_3(b) 
	(n-n_0)}\right]\right|.
	\end{equation}
	Using the expansion of $I_{1+c_1}(z)$ \cite{weisstein2006modified},
	\begin{equation}
	I_{1+c_1}\left[\frac{\pi}{k} \sqrt{\frac{2}{3} c_3(b) 
	(n-n_0)}\right]=\sum_{j=0}^\infty \frac{1}{\Gamma(j+c_1+2) 
	j!}\left(\frac{\pi}{2k}\sqrt{\frac{2}{3} c_3(b) (n-n_0)}\right)^{2j+1+c_1}.
	\end{equation}
	Suppose that $k_0$ is a real number satisfying $0<k_0\leq k$.  
	\begin{align}
	\left|I_{1+c_1}\left[\frac{\pi}{k} \sqrt{\frac{2}{3} c_3(b) 
	(n-n_0)}\right]\right|&\leq \sum_{j=0}^\infty \frac{1}{\Gamma(j+c_1+2) 
	j!}\left(\frac{\pi}{2k_0}\sqrt{\frac{2}{3} c_3(b)
	(n-n_0)}\right)^{2j+1+c_1}\left( \frac{k_0}{k} \right)^{2j+1+c_1} \\
	&\leq \left( \frac{k_0}{k} \right)^{1+c_1} \sum_{j=0}^\infty 
	\frac{1}{\Gamma(j+c_1+2) j!}\left(\frac{\pi}{2k_0}\sqrt{\frac{2}{3} 
	c_3(b) (n-n_0)}\right)^{2j+1+c_1}\\
	&=\left( \frac{k_0}{k} \right)^{1+c_1}I_{1+c_1}\left[\frac{\pi}{k_0} 
	\sqrt{\frac{2}{3} c_3(b) (n-n_0)}\right] .
	\end{align}
	Summing over all relevant $k$,
	\begin{equation}
	\sum_{\substack{k \in [b]\\ k>\lcm(\calM)}} 
	\left|I_{1+c_1}\left[\frac{\pi}{k} \sqrt{\frac{2}{3} c_3(b) 
		(n-n_0)}\right]\right| \leq k_0^{1+c_1} \zeta(1+c_1) I_{1+c_1}\left[ 
		\frac{\pi}{k_0} \sqrt{\frac{2}{3} c_3(b) (n-n_0)} \right].
	\end{equation}
	Therefore, setting $k_0 = b+1/2$ yields
	\begin{equation}
	\sum_{\substack{k \in [b]\\ k>\lcm(\calM)}} 
	\left|I_{1+c_1}\left[\frac{\pi}{k} \sqrt{\frac{2}{3} c_3(b) 
	(n-n_0)}\right]\right|=O(e^{-C_b \sqrt{n}}) I_{1+c_1}\left[ \frac{\pi}{b} 
	\sqrt{\frac{2}{3}c_3(b)(n-n_0)} \right]
	\end{equation}
	for some constant $C_b>0$ depending on $b$. Therefore, the expression in 
	Eq. 
	\ref{allbterms} is
	\begin{equation}
	c_2(b)c_3(b)^{(c_1+1)/2}
	(A_b(n)b^{-1}+O(e^{-C_b\sqrt{n}})) I_{1+c_1}\left[\frac{\pi}{k} 
	\sqrt{\frac{2}{3} c_3(b) (n-n_0)}\right].
	\end{equation}
	It follows that 
	\begin{equation}
	\sum_{\substack{k \in [b]\\ k>0}}  c_2(k)c_3(k)^{(c_1+1)/2} 
	A_k(n)k^{-1} I_{1+c_1}\left[\frac{\pi}{k} \sqrt{\frac{2}{3} c_3(k) 
	(n-n_0)}\right]
	\end{equation}
	is 
	\begin{equation}
	c_2(b)c_3(b)^{(c_1+1)/2}
	(A_b(n)b^{-1}+O(e^{-C_b \sqrt{n}})) I_{1+c_1}\left[\frac{\pi}{b} 
	\sqrt{\frac{2}{3} c_3(b) (n-n_0)}\right].
	\end{equation}
	We can sum this result for all $b\in\{1,\ldots,\lcm(\calM)\}$ with 
	$c_3(b)>0$. We can absorb the terms with $b\not\in \calK$ into the error 
	term.
	So, for some constant $C$ depending only on $\{\delta_m\}_{m=1}^\infty$,
	\begin{equation}
	d(n) = 2\pi \left(\frac{1}{24(n-n_0)}\right)^{\frac{c_1+1}{2}}\sum_{k\in 
	\calK}(A_k(n)k^{-1}+O(e^{-C\sqrt{n}}))c_2(k) c_3(k)^{\frac{c_1+1}{2}} 
	I_{1+c_1}\left[ \frac{\pi}{k}\sqrt{\frac{2}{3}c_3(k) (n-n_0)} \right].
	\end{equation}
	Since for $k\in\calK$ it is the case that $c_3(k)=k^2c_3$, 
	\begin{equation}
	d(n) =2\pi  \left(\frac{c_3}{24(n-n_0)}\right)^{\frac{c_1+1}{2}} 
	I_{1+c_1}\left[ 
	\pi\sqrt{\frac{2}{3}c_3 (n-n_0)} \right] \sum_{k\in 
	\calK}(A_k(n)k^{-1}+O(e^{-C\sqrt{n}}))c_2(k)k^{1+c_1} .
	\label{eq:second_to_last_asymp}
	\end{equation}
	Note that each $O(e^{-C\sqrt{n}})$ in the sum over $k$ in Eq. 
	\ref{eq:second_to_last_asymp} is different. Nevertheless, using the 
	assumption in Eq. \ref{eq:asympt_nonzero_const},
	\begin{equation}
	d(n) = (1+O(e^{-C\sqrt{n}})) 2\pi 
	\left(\frac{c_3}{24(n-n_0)}\right)^{\frac{c_1+1}{2}}  
	I_{1+c_1}\left[ \pi\sqrt{\frac{2}{3}c_3 (n-n_0)} \right]\sum_{k\in 
	\calK}A_k(n)k^{c_1}c_2(k) ,
	\label{eq:final_approx}
	\end{equation}
	as claimed, where now the $O(e^{-C\sqrt{n}})$ term is bounded in terms of 
	$\{\delta_m\}_{m=1}^\infty$ and $\epsilon$.
\end{proof}

\section{Numerics} \label{sec:numerics}

In this section we numerically test Eq. 
\ref{eq:1_rademacher_main} 
for several $\eta$-quotients $Z(q)$. Here $d(n,N)$ represents the $N$th partial 
sum of the right hand side of Eq. \ref{eq:1_rademacher_main} and $d(n)$ 
represents the Fourier coefficients of $Z(q)\cdot q^{n_0}$. The following 
$\eta$-quotients all satisfy the hypotheses of Theorem 
\ref{main}. The absolute difference between $d(n,N)$ and $d(n)$ for 
$n\in\{1,\ldots,20\}$ with $n>n_0$ and $N\in\{1,\ldots,100\}$ is plotted in 
Fig. \ref{fig:plotGood} for 
\begin{enumerate}
	\item $Z(q) = 1/\eta(4\tau)\eta(\tau)^3$ (upper left),
	\item $Z(q) = \eta(4\tau)/\eta(\tau)^3$ (upper right),
	\item $Z(q) = 1/\eta(2\tau)$ (middle left),
	\item $Z(q) = 1/\eta(11\tau)^2 \eta(\tau)^2$ (middle right),
	\item $Z(q) = 1/\eta(\tau)\eta(22\tau)$ (bottom left), 
	\item $Z(q) = 1/\eta(\tau)\eta(23\tau)$ (bottom right).
\end{enumerate}
The convergence of $d(n,N)$ to $d(n)$ as $N\to\infty$ is 
clear, although a few trends are worth noting. The first is that the 
convergence of $d(n,N)$ to $d(n)$ is rather haphazard. The second is that the 
rate of convergence of 
$d(n,N)$ to $d(n)$ may depend significantly on $n$. This appears to be the case 
for the final 
few $\eta$-quotients above.

\begin{figure}[t]
		\centering
		\includegraphics[width=.95\textwidth]{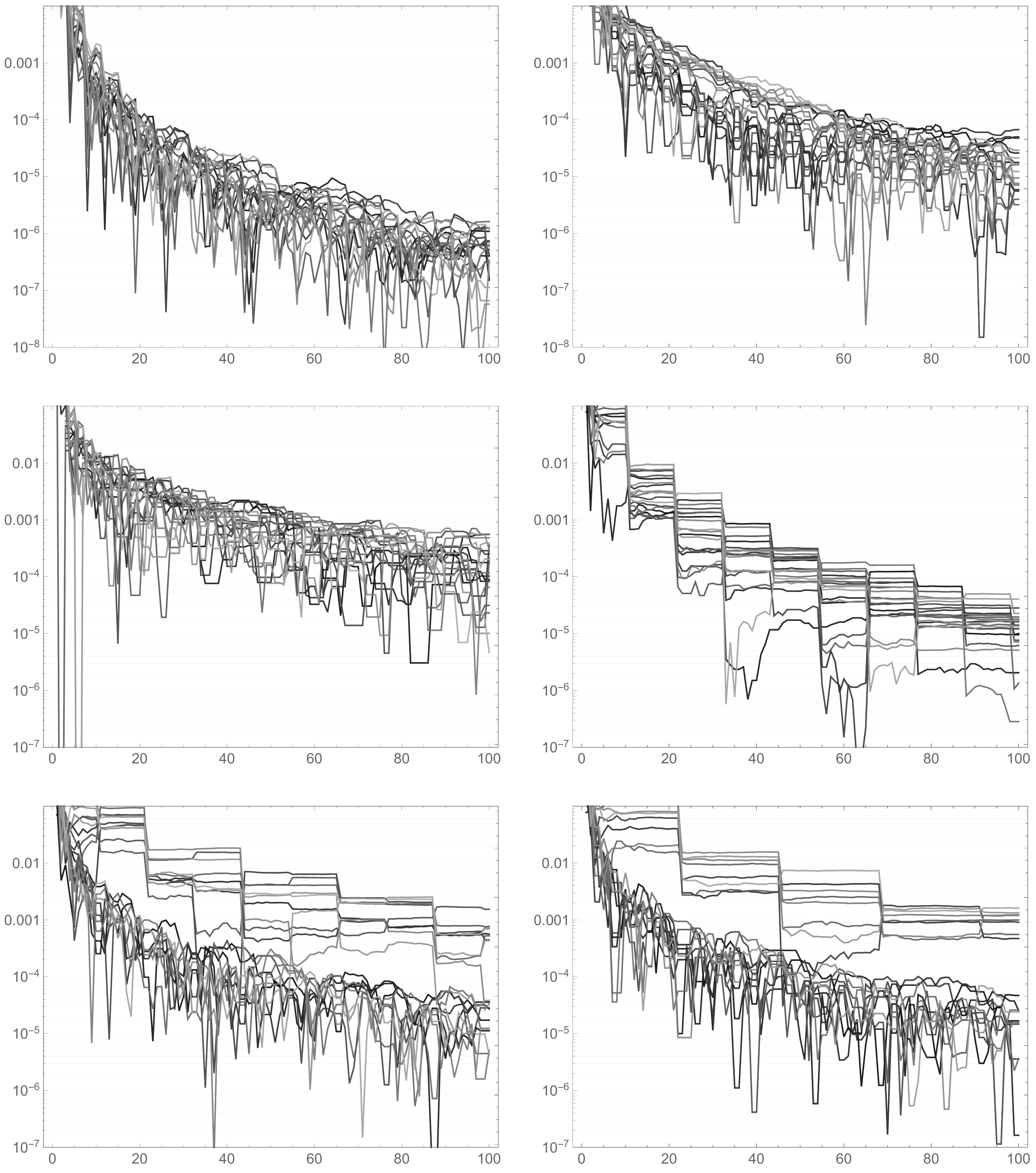}
		\caption{Convergence of the $N$th partial sums of Eq. 
			\ref{eq:1_rademacher_main} for the listed $\eta$-quotients: $Z(q) = 
			1/\eta(4\tau)\eta(\tau)^3$ (upper left),
			$Z(q) = \eta(4\tau)/\eta(\tau)^3$ (upper right),
			$Z(q) = 1/\eta(2\tau)$ (middle left),
			$Z(q) = 1/\eta(11\tau)^2 \eta(\tau)^2$ (middle right),
			$Z(q) = 1/\eta(\tau)\eta(22\tau)$ (bottom left), 
			$Z(q) = 1/\eta(\tau)\eta(23\tau)$ (bottom right). The vertical axis 
			is $|d(n,N)-d(n)|$ and the horizontal axis is $N$. The vertical 
			axis is scaled logarithmically and the horizontal axis is scaled 
			linearly. Each line is a plot of $d(n,N)$ for 
			fixed $n$ and variable $N$. The lines are different shades of gray 
			to help visually distinguish them.}
		\label{fig:plotGood}
\end{figure}

\clearpage
\section*{Acknowledgements}

I would like to thank Prof. Shamit Kachru not only for introducing me to 
the topic of small black holes, but also for taking the time to patiently and 
kindly 
answer my many 
questions. I am also grateful to the other students in SITP for making it such 
a welcoming place, and especially Richard Nally and Brandon Rayhaun for their 
helpful conversations and general advice. Finally, I would like to express my 
gratitude towards Prof. John Duncan for pointing out an error in an earlier 
version 
of this paper.
While working on this  
project I was funded by the Stanford physics department summer research program 
and a Stanford UAR major grant.

\bibliographystyle{unsrt}
\bibliography{RademacherBib}

\begin{thebibliography}{10}

\bibitem{euler1748introductio}
Leonhard Euler.
\newblock {\em Introductio in analysin infinitorum, auctore Leonhardo
  Eulero...}
\newblock Marcum-Michaelem Bousquet, 1748.

\bibitem{Apostol}
T.M. Apostol.
\newblock {\em Modular Functions and Dirichlet Series in Number Theory}.
\newblock Graduate Texts in Mathematics. Springer New York, 1976.

\bibitem{hardy244asymptotic}
GH~Hardy and S~Ramanujan.
\newblock Asymptotic formulae in combinatory analysis [proc. london math.
  soc.(2) 16 (1917), records for 1 march 1917].
\newblock {\em Collected papers of Srinivasa Ramanujan}, 244.

\bibitem{rademacher1938partition}
Hans Rademacher.
\newblock On the partition function $p(n)$.
\newblock {\em Proceedings of the London Mathematical Society}, 2(1):241--254,
  1938.

\bibitem{Rademacher43}
Hans Rademacher.
\newblock On the expansion of the partition function in a series.
\newblock {\em Annals of Mathematics}, 44(3):416--422, 1943.

\bibitem{RademacherOriginal}
Hans Rademacher and Herbert~S. Zuckerman.
\newblock On the fourier coefficients of certain modular forms of positive
  dimension.
\newblock {\em Annals of Mathematics}, 39(2):433--462, 1938.

\bibitem{zuckerman1939coefficients}
Herbert~S Zuckerman.
\newblock On the coefficients of certain modular forms belonging to subgroups
  of the modular group.
\newblock {\em Transactions of the American Mathematical Society},
  45(2):298--321, 1939.

\bibitem{BringmannOno}
Kathrin Bringmann and Ken Ono.
\newblock {\em Coefficients of Harmonic Maass Forms}, pages 23--38.
\newblock Springer New York, New York, NY, 2012.

\bibitem{grosswald1958some}
Emil Grosswald.
\newblock Some theorems concerning partitions.
\newblock {\em Transactions of the American Mathematical Society},
  89(1):113--128, 1958.

\bibitem{grosswald1960partitions}
Emil Grosswald et~al.
\newblock Partitions into prime powers.
\newblock {\em The Michigan Mathematical Journal}, 7(2):97--122, 1960.

\bibitem{haberzetle1941some}
Mary Haberzetle.
\newblock On some partition functions.
\newblock {\em American Journal of Mathematics}, 63(3):589--599, 1941.

\bibitem{hagis1962problem}
Peter Hagis.
\newblock A problem on partitions with a prime modulus $p\geq 3$.
\newblock {\em Transactions of the American Mathematical Society},
  102(1):30--62, 1962.

\bibitem{hagis1963partitions}
Peter Hagis.
\newblock Partitions into odd summands.
\newblock {\em American Journal of Mathematics}, 85(2):213--222, 1963.

\bibitem{hagis1964partitions}
Peter Hagis.
\newblock Partitions into odd and unequal parts.
\newblock {\em American Journal of Mathematics}, 86(2):317--324, 1964.

\bibitem{hagis1966some}
Peter Hagis.
\newblock Some theorems concerning partitions into odd summands.
\newblock {\em American Journal of Mathematics}, 88(3):664--681, 1966.

\bibitem{hua1942number}
Loo-keng Hua.
\newblock On the number of partitions of a number into unequal parts.
\newblock {\em Transactions of the American Mathematical Society},
  51(1):194--201, 1942.

\bibitem{iseki1960some}
Sh{\^o} Iseki.
\newblock On some partition functions.
\newblock {\em Journal of the Mathematical Society of Japan}, 12(1):81--88,
  1960.

\bibitem{iseki1961partitions}
Sh{\^o} Iseki.
\newblock Partitions in certain arithmetic progressions.
\newblock {\em American Journal of Mathematics}, 83(2):243--264, 1961.

\bibitem{livingood1945partition}
John Livingood.
\newblock A partition function with the prime modulus $p> 3$.
\newblock {\em American Journal of Mathematics}, 67(2):194--208, 1945.

\bibitem{niven1940certain}
Ivan Niven.
\newblock On a certain partition function.
\newblock {\em American Journal of Mathematics}, 62(1):353--364, 1940.

\bibitem{sills2010rademacher}
Andrew~V Sills.
\newblock A {R}ademacher type formula for partitions and overpartitions.
\newblock {\em International Journal of Mathematics and Mathematical Sciences},
  2010, 2010.

\bibitem{sills2010rademacher2}
Andrew~V Sills.
\newblock Rademacher-type formulas for restricted partition and overpartition
  functions.
\newblock {\em The Ramanujan Journal}, 23(1):253--264, 2010.

\bibitem{kiria2011rademacher}
Vyacheslav Kiria-Kaiserberg.
\newblock A {R}ademacher-type formula for $pod(n)$.
\newblock {\em International Journal of Mathematics and Mathematical Sciences},
  2011, 2011.

\bibitem{mc2012hardy}
James McLaughlin and Scott Parsell.
\newblock A {H}ardy--{R}amanujan--{R}ademacher-type formula for $(r,
  s)$-regular partitions.
\newblock {\em The Ramanujan Journal}, 28(2):253--271, 2012.

\bibitem{CHLoriginal}
Shyamoli Chaudhuri, George Hockney, and Joseph~D. Lykken.
\newblock {Maximally supersymmetric string theories in $D < 10$}.
\newblock {\em Phys. Rev. Lett.}, 75:2264--2267, 1995.

\bibitem{He:2013lha}
Yang-Hui He and John McKay.
\newblock {Eta Products, BPS States and K3 Surfaces}.
\newblock {\em JHEP}, 01:113, 2014.

\bibitem{CHLcomposite}
Suresh Govindarajan and K.~Gopala~Krishna.
\newblock {BKM Lie superalgebras from dyon spectra in $\ZZ_N$ CHL orbifolds for
  composite N}.
\newblock {\em JHEP}, 05:014, 2010.

\bibitem{0409}
Atish Dabholkar.
\newblock {Exact counting of black hole microstates}.
\newblock {\em Phys. Rev. Lett.}, 94:241301, 2005.

\bibitem{0507}
Atish Dabholkar, Frederik Denef, Gregory~W. Moore, and Boris Pioline.
\newblock {Precision counting of small black holes}.
\newblock {\em JHEP}, 10:096, 2005.

\bibitem{0502}
Atish Dabholkar, Frederik Denef, Gregory~W. Moore, and Boris Pioline.
\newblock {Exact and asymptotic degeneracies of small black holes}.
\newblock {\em JHEP}, 08:021, 2005.

\bibitem{SenNotes}
Ashoke Sen.
\newblock {Black Hole Entropy Function, Attractors and Precision Counting of
  Microstates}.
\newblock {\em Gen. Rel. Grav.}, 40:2249--2431, 2008.

\bibitem{JatkarSen}
Dileep~P. Jatkar and Ashoke Sen.
\newblock {Dyon spectrum in CHL models}.
\newblock {\em JHEP}, 04:018, 2006.

\bibitem{Govindarajan:2010fu}
Suresh Govindarajan.
\newblock {BKM Lie superalgebras from counting twisted CHL dyons}.
\newblock {\em JHEP}, 05:089, 2011.

\bibitem{Persson:2015jka}
Daniel Persson and Roberto Volpato.
\newblock {Fricke S-duality in CHL models}.
\newblock {\em JHEP}, 12:156, 2015.

\bibitem{Paquette:2017gmb}
Natalie~M. Paquette, Roberto Volpato, and Max Zimet.
\newblock {No More Walls! A Tale of Modularity, Symmetry, and Wall Crossing for
  1/4 BPS Dyons}.
\newblock {\em JHEP}, 05:047, 2017.

\bibitem{deBoer:2006vg}
Jan de~Boer, Miranda C.~N. Cheng, Robbert Dijkgraaf, Jan Manschot, and Erik
  Verlinde.
\newblock {A Farey Tail for Attractor Black Holes}.
\newblock {\em JHEP}, 11:024, 2006.

\bibitem{Manschot:2007ha}
Jan Manschot and Gregory~W. Moore.
\newblock {A Modern Farey Tail}.
\newblock {\em Commun. Num. Theor. Phys.}, 4:103--159, 2010.

\bibitem{Murthy:2009dq}
Sameer Murthy and Boris Pioline.
\newblock {A Farey tale for $\calN=4$ dyons}.
\newblock {\em JHEP}, 09:022, 2009.

\bibitem{Dabholkar:2014ema}
Atish Dabholkar, Joao Gomes, and Sameer Murthy.
\newblock {Nonperturbative black hole entropy and Kloosterman sums}.
\newblock {\em JHEP}, 03:074, 2015.

\bibitem{Murthy:2015yfa}
Sameer Murthy and Valentin Reys.
\newblock {Functional determinants, index theorems, and exact quantum black
  hole entropy}.
\newblock {\em JHEP}, 12:028, 2015.

\bibitem{Gomes:2017bpi}
Joao Gomes.
\newblock {Generalized Kloosterman Sums from M2-branes}.
\newblock 2017.

\bibitem{Gomes:2017eac}
Joao Gomes.
\newblock {Quantum Black Hole Entropy, Localization and the Stringy Exclusion
  Principle}.
\newblock 2017.

\bibitem{rademacher1938fourier}
Hans Rademacher.
\newblock The {F}ourier coefficients of the modular invariant $j (\tau)$.
\newblock {\em American Journal of Mathematics}, 60(2):501--512, 1938.

\bibitem{rademacher1939fourier}
Hans Rademacher.
\newblock The {F}ourier series and the functional equation of the absolute
  modular invariant $j (\tau)$.
\newblock {\em American Journal of Mathematics}, 61(1):237--248, 1939.

\bibitem{duncan2009rademacher}
John~FR Duncan and Igor~B Frenkel.
\newblock Rademacher sums, moonshine and gravity.
\newblock {\em arXiv preprint arXiv:0907.4529}, 2009.

\bibitem{cheng2011rademacher}
Miranda~CN Cheng and John~FR Duncan.
\newblock On rademacher sums, the largest {M}athieu group, and the holographic
  modularity of moonshine.
\newblock {\em arXiv preprint arXiv:1110.3859}, 2011.

\bibitem{duncan2015proof}
John~FR Duncan, Michael~J Griffin, and Ken Ono.
\newblock Proof of the umbral moonshine conjecture.
\newblock {\em Research in the Mathematical Sciences}, 2(1):26, 2015.

\bibitem{hsu2011partition}
Justin Hsu.
\newblock The partition function and modular forms.
\newblock Master's thesis, Stanford University, 2011.

\bibitem{boas2006mathematical}
Mary~L Boas.
\newblock {\em Mathematical methods in the physical sciences}.
\newblock Wiley, 2006.

\bibitem{weisstein2006modified}
Eric~W Weisstein.
\newblock Modified {B}essel function of the {F}irst {K}ind. mathworld--a
  wolfram web resource, 2006.

\end{thebibliography}

\end{document}